\newtheorem{theorem}{Theorem}[section]
\newtheorem*{theorem*}{Theorem}
\newtheorem{lemma}[theorem]{Lemma}
\newtheorem*{lemma*}{Lemma}
\newtheorem{corollary}[theorem]{Corollary}
\newtheorem*{corollary*}{Corollary}
\newtheorem{proposition}[theorem]{Proposition}
\newtheorem*{proposition*}{Proposition}
\theoremstyle{definition}
\newtheorem{definition}[theorem]{Definition}
\newtheorem*{definition*}{Definition}
\newtheorem*{example*}{Example}
\newtheorem*{examples*}{Examples}
\newtheorem{remark}[theorem]{Remark}
\newtheorem*{remark*}{Remark}
\newtheorem{result}{Result}
\newtheorem*{question*}{Question}
\newtheorem*{problem*}{Problem}
\newtheorem*{notation*}{Notation}
\newtheorem*{note*}{Note}
\newtheorem*{algorithm*}{Algorithm}
\newcommand{\Colon}{\:\mathbin{:}\:}
\newcommand{\Bref}[1]{(\ref{#1})}
\newcommand{\shape}{\mathbin{\mbox{\rm sh}}\,}
\newcommand{\ic}{\mathbin{{\rm ic}}}
\newcommand{\oc}{\mathbin{{\rm oc}}}
\newcommand{\LEQ}{\leqslant}
\let\unlhd\trianglelefteqslant
\title{On $C$-bases, partition pairs and filtrations for\\ induced or restricted Specht modules} 
\author{
Christos A. Pallikaros%
\thanks{%
\textit{Department of Mathematics and Statistics,
University of Cyprus,
P.O.Box 20537,
1678 Nicosia,
Cyprus.}
{E-mail: pallikar@ucy.ac.cy}
}
}
\date{December 11, 2017}
\begin{document}
\maketitle
\begin{abstract}
We obtain alternative explicit Specht filtrations for the induced and the restricted Specht modules in the Hecke algebra of the symmetric group (defined over the ring $A=\mathbb Z[q^{1/2},q^{-1/2}]$ where $q$ is an indeterminate) using $C$-bases for these modules.
Moreover, we provide a link between a certain $C$-basis for the induced Specht module and the notion of pairs of partitions.
\\[1.5ex]
{\it Key words}: Kazhdan--Lusztig cell; Specht filtration;  induction and restriction; pairs of partitions
\\
{\it 2010 MSC Classification}: 20C08; 20C30; 05E10
\end{abstract}
\section{Introduction}\label{sec:1c}

For any Coxeter system $(W,S)$ Kazhdan and Lusztig~\cite{KLu79} introduced three preorders $\LEQ_L$, $\LEQ_R$ and $\LEQ_{LR}$
with corresponding equivalence relations $\sim_L$, $\sim_R$ and $\sim_{LR}$,
the equivalence classes of which are called left cells, right cells and two-sided cells respectively.
From their construction, these cells give rise to corresponding cell modules for the Hecke algebra $\mathcal H$
(defined over the ring $A=\mathbb Z[q^{1/2},q^{-1/2}]$ where $q$ is an indeterminate)
associated with this Coxeter system.
In the special case $W$ is the symmetric group, the cell to which an element belongs can be determined by examining the tableaux resulting
from an application of the Robinson-Schensted process to that element.
In particular, elements belonging to the same right cell have the same recording tableau.

In~\cite{KLu79} Kazhdan and Lusztig introduced the $C$-basis for~$\mathcal H$ and this basis turns out to be extremely useful
when investigating the representation theory of~$\mathcal H$ (or $W$).
In particular, each cell of~$W$ provides an integral representation of~$W$.

In~\cite{BVo83}, Barbasch and Vogan considering the case $W$ is a Weyl group, showed that the $C$-basis is compatible with induction and restriction
of representations and established very useful rules for naturally decomposing such representations.
These results were generalized by Roichman~\cite{Roi98} and Geck~\cite{Gec03}.
In~\cite{MPa17} a description of how one can identify the right cells occurring in an induced or restricted Kazhdan-Lusztig cell via their recording tableau is given.
In view of this, also making use of the results in Geck~\cite{Gec03} where the approach is independent of the theory of primitive ideals,
we obtain elementary constructions of  Kazhdan-Lusztig cell module filtrations (and hence Specht filtrations) for the induced or restricted  cell modules of the Hecke algebra
of $W$ when $W$ is the symmetric group $S_m$.
In~\cite{MPa05} it was observed that a suitable choice of a subset of a $C$-basis of~$\mathcal H$ gives in a natural way
a corresponding $C$-basis for the Specht module.
In Section~\ref{Section:SpechtFiltrations} we obtain, by similar argument, a $C$-basis for the induced Specht module.
These observations enable us to obtain alternative explicit Specht filtrations for the induced and the restricted Specht module (see Theorems~\ref{thm:SpFilForS2H} and~\ref{thm:SpFilForSlambdaRestToH}).
These filtrations have the following property:
Denoting by~$\nu^{(i)}$ the partition associated to the $i$th factor of the filtration for the induced or restricted Specht module,
we get that $\nu^{(i)}\vartriangleleft\nu^{(i+1)}$  for all~$i$.

We note here that corresponding filtrations for the symmetric group case already appear in James~\cite{Jam78} and can be deduced from the results in~\cite{DJa86} for the Hecke algebra of the symmetric group case.
Also in Jost~\cite{Jost} branching theorems for Specht modules in the Hecke algebra are obtained.
Moreover, in~\cite{Mathas09} Mathas gives Specht filtrations for the induced Specht module working in the more general context of Ariki-Koike algebras (see also~\cite{Mathas16} for the restricted Specht module case).

In Section~\ref{sec:LinkSequenPairsOfPart} we provide a link between a $C$-basis of the induced Specht module described earlier on in the paper with the notion of pairs of partitions introduced by James in~\cite{James1977}.
We remark here that in~\cite{James1977,Jam78}, see also~\cite{DJa86} for the Hecke algebra case, the notion of pairs of partitions is used in order to construct Specht filtrations for certain modules which are generalizations of induced Specht modules associated to partitions.
In particular, in this final section, we associate to each pair of partitions $(\lambda,\mu)$ for $m$  (note that this terminology allows $\mu$ to be a composition whereas $\lambda$ is necessarily a partition) a subset $L(\lambda,\mu)$ of the set $L(\mu)=\{w\in W:\ w\le_L w_{J(\mu)}\}$ where $w_{J(\mu)}$ is the longest element in the standard parabolic subgroup $W_{J(\mu)}$ of $W=S_m$ corresponding to the composition $\mu$ of $m$.
(Recall that $L(\mu)=\mathfrak X^{-1}_{J(\mu)}w_{J(\mu)}$ where $\mathfrak X_{J(\mu)}$ is the set of distinguished right coset representatives of $W_{J(\mu)}$ in $W$.)
We show that $L(\lambda,\mu)$ is a union of left cells of $W$ (see Proposition~\ref{prop:LmulambdaLlambdamu}), and relate this set to a certain $C$-basis for the induced Specht module obtained in Section~\ref{Section:SpechtFiltrations}.
Moreover, in Theorem~\ref{prop:(1)} (see also Corollary~\ref{cor:4.9}) we show that the set $\{e\in \mathfrak X_{J(\mu)}:\ e^{-1}w_{J(\mu)}\in L(\lambda,\mu)\}$ satisfies the Schreier property.
A key observation in proving some of the above results is given in Proposition~\ref{prop:4.2}  which relates the cells occurring in the set $\{w\in W:\ w\le_Lw_{J(\mu)}\}$ to semistandard tableaux of type $\mu$.

\section{Generalities and preliminaries}\label{sec:PrelGen}

\subsection{Hecke algebras: background}\label{Subsection:HeckeABackgr}

In this subsection, we outline some terminology and classical results of
Hecke algebra theory, with appropriate notation.

For basic concepts relating to Coxeter groups and Hecke algebras,
see Geck and Pfeiffer~\cite{GPf00}, Kazhdan and Lusztig~\cite{KLu79}
and Humphreys~\cite{Hum90}.
In particular, for a Coxeter system $(W,S)$,
the \emph{length} $l(w)$ $(=l_S(w))$ of an element $w\in W$ with
respect to the generators $S$ is the length of the word in $S$ with
fewest generators which is equal to $w$,
$W_J=\langle J\rangle$ denotes the parabolic subgroup determined by a
subset $J$ of $S$, $w_J$ denotes the longest element of $W_J$
and $l_J$ is its length function
(in fact, $l_J(w)=l(w)$ for all $w\in W_J$),
$\mathfrak{X}_J$ denotes the set of minimum length elements in the
right cosets of $W_J$ in $W$
(the distinguished right coset representatives),
$\LEQ$ denotes the strong Bruhat order on $W$,
and $w<w'$ means $w\LEQ w'$ and $w\neq w'$ if $w,w'\in W$.
The pair $(W_J,J)$ is a Coxeter system whose length function is the same as the
restriction of the length function of $(W,S)$ to it;
consequently, $w_J$ is determined entirely by $J$.
\par
The Hecke algebra $\mathcal{H}$ corresponding to $(W,S)$ and
defined over the ring $A=\mathbf{Z}[q^{\frac{1}{2}},q^{-\frac{1}{2}}]$,
where $q$ is an indeterminate, has a free $A$-basis
$\{T_w\Colon w\in W\}$ and multiplication defined by the rules
\begin{equation}
\label{eqn:1a}
\begin{tabular}{rl}
(i) & $T_wT_{w'}=T_{ww'}$ if $l(ww')=l(w)+l(w')$ and \\[2pt]
(ii) & $(T_s+1)(T_s-q)=0$ if $s\in S$.
\end{tabular}
\end{equation}
The basis $\{T_w\Colon w\in W\}$ is called the \emph{$T$-basis} of
$\mathcal{H}$.
(See \cite{KLu79}).

\begin{result}
[{\cite[Theorem~1.1]{KLu79}}]
\label{res:1a}
$\mathcal{H}$ has a basis $\{C_w\Colon w\in W\}$, the \emph{$C$-basis},
whose terms have the form
$C_y =
\sum_{x\LEQ y}
(-1)^{l(y)-l(x)}q^{\frac{1}{2}l(y)-l(x)}P_{x,y}(q^{-1})T_x$,
where $P_{x,y}(q)$ is a polynomial in $q$ with integer coefficients
of degree $\le \frac{1}{2}\left(l(y)-l(x)-1\right)$ if $x<y$ and
$P_{y,y}=1$.
\end{result}
\par
There is a ring automorphism $j_{\mathcal{H}}$
of $\mathcal{H}$ defined by
$\left(\sum_{y\in W}a_yT_y\right)\!\!j_{\mathcal{H}}$
$=\sum_{y\in W}\overline{a}_y\left(-q^{-1}\right)^{l(y)}T_y$,
where $a\mapsto \overline{a}$ is the automorphism of $A$ defined by
$q^{\frac{1}{2}}\mapsto q^{-\frac{1}{2}}$
(this is the ring involution $\jmath$ in \cite[p.166]{KLu79}).
This automorphism is used to relate the $C$-basis of $\mathcal{H}$ to
another basis $\{C_w'\colon w\in W\}$ known as the $C'$-basis, which
may be defined by $C_w'=(-1)^{l(w)}C_wj_{\mathcal{H}}$.
\par
There is another $A$-basis of $\mathcal{H}$ which has also been used
in the literature instead of the $T$-basis and sometimes called the
$\tilde T$-basis. 
This is defined by Lusztig~\cite{Lus83} as $\{\tilde T_w: w\in W\}$
where $\tilde T_w=(q^{-1/2})^{l(w)}T_w$ ($w\in W$).
In~\cite{Gec03} Geck uses the $\tilde T$-basis but calls it the
$T$-basis.
\par
Moreover, with $q$ as in Result~\ref{res:1a}, and using the approach
of Lusztig~\cite{Lus83} with $q_s=q$ for all $s\in S$, let $\Gamma$ be
the (multiplicative) abelian group generated by $q^{1/2}$ and consider
the natural total order $\LEQ_\Gamma$ on $\Gamma$ given by
$\Gamma_+=\{(q^{1/2})^m: m>0\}$.
Then $q^{1/2}\in \Gamma_+$ as required in~\cite{Lus83,Gec03} and
$q^{i/2}\LEQ_\Gamma q^{j/2}$ if and only if $i\le j$---compare~\cite[pages
105--106]{Lus83}.
The $C'$-basis in~\cite{Lus83} derived using this total order is
precisely the $C'$-basis in~\cite{KLu79} and described in
Result~\ref{res:1a} (see~\cite[pages 101,105--106]{Lus83}).
So with the above order on $\Gamma$ the $C$-basis in~\cite{Gec03} is,
in fact, the $C'$-basis in~\cite{KLu79} (and in this paper).
\par
In what follows we would like to use some results from~\cite{Gec03}.
The preceding remarks allow us to translate directly from results
in~\cite{Gec03} to our notation.
For example, applying the ring involution $\jmath_{\mathcal{H}}$,
we see that~\cite[Lemma~2.2 and Corollary~3.4]{Gec03} still hold for
our $C$-basis and $T$-basis.
For convenience, we record the result corresponding to $C'$ obtained
by applying $\jmath_{\mathcal{H}}$ to Result~\ref{res:1a}.
\begin{result}
[{\cite{KLu79}}]
\label{res:2a}
$\mathcal{H}$ has a basis $\{C'_w\Colon w\in W\}$, the
\emph{$C'$-basis}, whose terms have the form
$C'_y = q^{-\frac{1}{2}l(w)}\sum_{x\LEQ y}P_{x,y}(q)T_x$,
where $P_{x,y}(q)$ is a polynomial in $q$ with integer coefficients
of degree $\le \frac{1}{2}\left(l(y)-l(x)-1\right)$ if $x<y$ and
$P_{y,y}=1$.
\end{result}
We have an immediate corollary:
\begin{corollary}
For any $y\in W$ and $h\in\mathcal{H}$,
\begin{equation}
\label{eqn:2a}
C_{y}'h=\sum_{x\in W}\alpha_{y,h,x}C_{x}'
\end{equation}
for suitable elements $\alpha_{y,h,x}\in A$.
\end{corollary}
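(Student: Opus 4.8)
The plan is to read this off directly from the basis property established in Result~\ref{res:2a}. Since $\mathcal{H}$ is an $A$-algebra, it is closed under multiplication; hence for any $y\in W$ and any $h\in\mathcal{H}$ the product $C'_y h$ is again an element of $\mathcal{H}$. By Result~\ref{res:2a}, the set $\{C'_x\Colon x\in W\}$ is an $A$-basis of $\mathcal{H}$, so every element of $\mathcal{H}$---and in particular $C'_y h$---can be written as an $A$-linear combination of the $C'_x$. Naming the resulting coefficients $\alpha_{y,h,x}\in A$ yields exactly~\Bref{eqn:2a}.

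Two small points are worth recording along the way. First, the coefficients $\alpha_{y,h,x}$ are \emph{uniquely} determined, since a basis is in particular $A$-linearly independent; this is what legitimizes attaching definite names to them, and shows they depend only on $y$, $h$ and $x$. Second, although $W$ may be infinite for a general Coxeter system, the sum in~\Bref{eqn:2a} is finite: an element of the free $A$-module $\mathcal{H}$ is by definition a finite $A$-linear combination of basis elements, so all but finitely many $\alpha_{y,h,x}$ vanish, and the notation $\sum_{x\in W}$ is merely shorthand for this finite sum.

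I do not expect any genuine obstacle here: the statement is an immediate consequence of the fact that $\{C'_x\Colon x\in W\}$ spans $\mathcal{H}$ over $A$. The only content beyond ``a basis spans'' is the implicit observation that the algebra structure keeps $C'_y h$ inside $\mathcal{H}$, which is automatic. Accordingly, I would present the proof in essentially a single sentence, noting that the corollary's real purpose is to fix the notation $\alpha_{y,h,x}$ for the structure constants governing right multiplication in the $C'$-basis, so that they are available for the filtration arguments later in the paper.
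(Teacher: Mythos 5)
Your proof is correct and matches the paper's (implicit) argument exactly: the corollary is stated as an immediate consequence of Result~\ref{res:2a}, namely that $\{C'_x\Colon x\in W\}$ is an $A$-basis of $\mathcal{H}$, so expanding $C'_yh$ in that basis is all there is to it. Your added remarks on uniqueness of the coefficients and finiteness of the sum are sound and harmless elaborations.
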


We now introduce the notation $\to_s$ for a relation defined by
$y\to_s x$ if $x,y\in W$, $s\in S$ and $\alpha_{y,T_s,x}\neq0$.
This relation and others appearing in this paragraph have been
described in \cite{KLu79,Lus83}, though the notation may differ.
The preorder $\LEQ_{R}$ on $W$ is defined by $u\LEQ_{R} v$ if there
is a sequence $t_1,\ldots,t_m$ of elements of $S$, not necessarily
distinct, and a sequence $u=u_0,u_1,\ldots,u_m=v$ such that
$u_i\to_{t_i}u_{i-1}$ for $i=1,\ldots,m$.
The corresponding equivalence relation is $\sim_R$.
The preorder $\LEQ_{L}$ and equivalence relation $\sim_L$ are defined
in an analogous way using the equations defined by the products
$hC'_{y}$ instead of $C'_{y}h$.
The preorder $\LEQ_{LR}$ is generated by $\LEQ_{R}$ and $\LEQ_{L}$, and
$\sim_{LR}$ is the corresponding equivalence relation.
Using this notation, equation~\Bref{eqn:2a} can be rewritten
\begin{equation}
\label{eqn:3a}
C_{y}'h=\sum_{x\LEQ_{R} y}\alpha_{y,h,x}C_{x}'.
\end{equation}
The equivalence classes of  $\sim_L$, $\sim_R$ and $\sim_{LR}$ are
called \emph{left cells}, \emph{right cells} and
\emph{two-sided cells}, respectively.
If $x,y\in W$, we write $x<_{L} y$ to mean $x\LEQ_{L} y$ and
$x\not\sim_L y$; $<_{R}$ and $<_{LR}$ have similar meanings.
If $x\in W$ and $U,U'\subseteq W$, we will write $x\LEQ_{L} U'$ to mean
$x\LEQ_{L} y$ for all $y\in U'$ and $U\LEQ_{L} U'$ to mean $x\LEQ_{L} U'$
for all $x\in U$.
We extend the use of $<_{L}$, $\LEQ_{R}$, $<_{R}$, $\LEQ_{LR}$ and $<_{LR}$
in a similar fashion.

On several occasions throughout the paper we will be working with a Coxeter system $(W,S)$ while, inside it, we have fixed a parabolic subgroup $W'$
with corresponding Coxeter system $(W',S')$ for some $S'\subseteq S$.
It will be convenient at this point to fix some notation which we will follow throughout when we work in this context.
In particular we will write $\mathfrak X'=\mathfrak X_{S'}$ and $\mathfrak X^*=\{x^{-1}: x\in\mathfrak X'\}$,
so $\mathfrak X^*$ is the set of distinguished left coset representatives of $W'$ in $W$.
We will also denote by $\mathcal H'$ the Hecke algebra corresponding to the Coxeter system $(W',S')$.
The Kazhdan-Lusztig $C$-basis of $\mathcal H'$ is obtained by
restricting the Kazhdan-Lusztig $C$-basis of $\mathcal H$ to those elements
indexed by $W'$ (see \cite[\S~2.7]{Gec05}).
Let $\LEQ_L'$, $\LEQ_R'$, $\LEQ_{LR}'$, $\sim_L'$, $\sim_R'$, and
$\sim_{LR}'$ denote the Kazhdan-Lusztig preorders and equivalence
relations of the Coxeter system $(W',S')$ corresponding to those
for $(W,S)$ mentioned above.
Hence, if $u,v\in W'$ then clearly $u\LEQ_R' v$ implies $u\LEQ_R v$,
$u\sim_R' v$ implies $u\sim_R v$, and $u<_R' v$ implies $u<_R v$.
In Geck~\cite{Gec03} it was shown that the reverse implications also hold, so $\LEQ_R'$, $<_R'$ and $\sim_R'$ are just the restrictions of $\LEQ_R$, $<_R$ and $\sim_R$ respectively to $W'$.

In this paper we will be making the following convention.
In the cases the discussion involves working with $W'$ and $W$ at the same time,
 statements such as $w_1\LEQ_R' w_2$ or $w_1\sim_R' w_2$
will imply that $w_1,w_2\in W'$ and are not in $W\backslash W'$.
Also for $\mathfrak C$ a right cell of $W'$ the statement $w\LEQ_R'\mathfrak C$ will imply that $w\in W'$ and $w$ does not belong to $W\setminus W'$.

Comparing with equation~\eqref{eqn:3a}, now working inside $W'$, we get (for $y\in W'$ and $h\in\mathcal H'$)
$C'_yh=\sum_{x\LEQ_R'y}\alpha_{y,h,x}'C_x'$.
It follows from the basis property that $\alpha_{y,h,x}=\alpha_{y,h,x}'$
(the $\alpha_{y,h,x}$ as in equation~\eqref{eqn:3a}) whenever $y,x\in W'$ and $h\in\mathcal H'$.
Moreover, again with  $y,x\in W'$ and $h\in\mathcal H'$, we see that the nonzero $\alpha_{y,h,x}$ in equation~\eqref{eqn:3a} are
necessarily ones for which $x\LEQ_R'y$, so in this special case equation~\eqref{eqn:3a} becomes
\begin{equation}\label{eqn:3'}
C'_yh=\sum_{x\LEQ_R'y}\alpha_{y,h,x}C_x'
\end{equation}
One consequence of this is that when we restrict the relation ``$\to_s$'' from $(W,S)$ to $(W',S')$
(that is, for $x,y\in W'$ and $s\in S'$)
we see that this restriction is precisely the corresponding relation for the system $(W',S')$.

The following result will be useful in what follows.
For a proof see \cite[Corollary~1.9(c)]{Lus87}.
See also \cite[Lemma~5.3]{Gec05} for a more elementary algebraic proof in the case that $W$ is the symmetric group.

\begin{result}[{\cite[Corollary~1.9(c)]{Lus87}}]
\label{res:2c}
If $W$ is a crystallographic group and $x,y\in W$ are such that
$x\sim_{LR}y$ and $x\LEQ_R y$ then $x\sim_R y$.
\end{result}

For $w\in W$, let $M_w$ and $\hat{M}_w$ denote the
$\mathcal{H}$-modules with $A$-bases $\{C_y\Colon y\in W$ and $y\LEQ_{R} w\}$ and
$\{C_y\Colon y\in W$ and $y<_{R} w\}$, respectively, and let $S_w=M_w/ \hat{M}_w$.
Then $S_w$ is a Kazhdan-Lusztig cell module and affords the cell
representation corresponding to the right cell containing $w$.
If $\mathfrak{C}$ denotes the right cell of $W$ containing $w$, we also write
$S_{\mathfrak{C}}$ for $S_w$, $M_{\mathfrak{C}}$ for $M_w$ and
$\hat{M}_{\mathfrak{C}}$ for $\hat{M}_w$.

It will be convenient on occasion
to extend the scalars of the algebras under consideration.
Let $R$ be any commutative ring with 1 and let $A\rightarrow R$ be a
ring homomorphism.
With each $A$-module $L$, we have
an associated $R$-module $R\otimes_A L$,
which we will denote briefly as $L_{R}$.
In particular, we obtain an $R$-algebra $\mathcal{H}_{R}$,
and $\mathcal{H}_{R}$-modules $M_{R,w}=R\otimes M_w$,
$\hat{M}_{R,w}=R\otimes M_w$,
and Kazhdan-Lusztig cell modules
$S_{R,w}=M_{R,w}/ \hat{M}_{R,w}\cong R\otimes S_w$.
\\
In particular, we will use $F$ to denote any field containing the
field of fractions $\mathbf{Q}\big(q^{\frac{1}{2}}\big)$ of $A$,
and assume that the homomorphism $A\rightarrow F$ is inclusion.

\subsection{Specht modules}

If $N$ is a $\mathcal H$-module, a \emph{filtration} of $N$ is an
ascending sequence of $\mathcal H$-submodules,
$0=N_0\subseteq N_1\subseteq\ldots\subseteq N_r=N$.
In the case $W$ is the symmetric group, we call this filtration a \emph{Specht filtration} if the factors
$N_{i}/N_{i-1}$ are isomorphic to Specht $\mathcal H$-modules.
As we will be working with Specht filtrations, below we recall
the generalizations of the notions of diagram and tableau
commonly used, a description of which was given in \cite{MPa15}, as well as some basic facts about Specht modules.

A composition $\lambda$ of $m$ is a sequence $(\lambda_i)=(\lambda_1,\lambda_2,\ldots)$ of non-negative integers such that $\sum\lambda_i=m$.
We will use the notation $\lambda\vDash m$ (resp., $\lambda\vdash m$) to denote that $\lambda$ is a composition (resp., partition) of $m$.
We will also be using the following convention:
By $\lambda=(\lambda_1,\ldots,\lambda_r)\vDash m$ we will mean that $\lambda$ is the composition $(\lambda_i)$ of $m$, where $\lambda_i>0$ for $1\le i\le r$ and $\lambda_i=0$ for $i>r$ (and say that the composition $\lambda$ has $r$ parts).

For the rest of this subsection $W$ will denote the symmetric group $S_m$ on $\{1,\ldots,m\}$.
For $1\le i\le m-1$ let $s_i$ be the basic transposition $(i,\, i+1)$, and let $S=\{s_1,\ldots,s_{m-1}\}$.
Then $S$ is a system of Coxeter generators for $W$.
For $\lambda=(\lambda_1, \ldots , \lambda_r)\vDash m$ we define the subset $J(\lambda)$ of $S$ to be
$S\backslash \{s_{\lambda_1},s_{\lambda_1+\lambda_2},\ldots, s_{\lambda_1+\ldots+\lambda_{r-1}}\}$.

A \emph{diagram} $D$ is a finite subset $\mathbb{Z}^2$.
We will assume, where possible, that $D$ has no empty rows or columns.
These are the principal diagrams of \cite{MPa15}.
We will also assume that both rows and columns of $D$ are indexed
consecutively from 1.
The \emph{row-composition} $\lambda_D$ (respectively,
\emph{column-composition} $\mu_D$) of $D$ is defined by
$\lambda_{D,k}$ (respectively, $\mu_{D,k}$) is the number of nodes on
the $k$-th row (respectively, column) of $D$.
If $\lambda$ and $\mu$ are compositions,
we will write $\mathcal{D}^{(\lambda,\mu)}$ for the set
of principle diagrams $D$ with $\lambda(D)=\lambda$ and $\mu(D)=\mu$.
A \emph{special diagram} is a diagram obtained from a Young diagram
by permuting the rows and columns.
%

Since it is immediate that $\mathcal{D}^{(\lambda,\lambda')}$ consists
of a single diagram if $\lambda$ is a partition
(this is the Young diagram of shape $\lambda$ corresponding to partition $\lambda$),
it follows easily that $\mathcal{D}^{(\lambda,\mu)}$ consists of a single special
diagram if $\lambda$ and $\mu$ are compositions with
$\lambda''=\mu'$.

If $D$ is a diagram with $m$ nodes,
a \emph{$D$-tableau} is a bijection
$t\Colon D\rightarrow \{1,\ldots,m\}$ and
we refer to $t(i,j)$, where $(i,j)\in D$, as the
$(i,j)$-\emph{entry} of $t$.
The group $W=S_m$ acts on the set of $D$-tableaux in the obvious
way --- if $w\in W$,
an entry $i$ is replaced by $iw$ and $tw$ denotes the tableau
resulting from the action of $w$ on the tableau $t$.
We denote by $t^{D}$ and  $t_{D}$ the two $D$-tableaux obtained by
filling the nodes of $D$ with $1,\ldots,m$ by rows and by columns,
respectively, and we write $w_{D}$ for the element of $W$ defined by
$t^{D}w_{D}=t_{D}$.

Let $\lambda$ and $\mu$ be compositions of $m$ satisfying
$\mu'=\lambda''$, that is $\mu$ is a rearrangement of $\lambda'$.
Let $D$ and $D'$ be the unique diagrams in
$\mathcal{D}^{(\lambda,\mu)}$ and $\mathcal{D}^{(\lambda,\lambda')}$,
respectively.
Following \cite{DJa86}, the corresponding Specht module for $\mathcal H$ is
$S^{\lambda}=x_{\lambda}T_{w_{D'}}y_{\lambda'}\mathcal H$ where
$x_{\lambda}=\sum_{w\in W_{J(\lambda)}} T_w$ and
$y_{\lambda}=\sum_{w\in W_{J(\lambda)}} (-q)^{-l(w)}T_w$.
In the notation of \cite[Section~3]{MPa05},
$x_{\lambda}=q^{(1/2)l(w_{J(\lambda)})}C'_{w_{J(\lambda)}}
\mbox{ and }
y_{\lambda}=\left(-q^{-1/2}\right)^{l(w_{J(\lambda)})}
C_{w_{J(\lambda)}}$
From \cite[Corollary~4.3(ii)]{MPa15}, we see that
$S^{\lambda}=x_{\lambda}\mathcal Hy_{\lambda'}\mathcal H$, the product
$x_{\lambda}T_{w_{D}}y_{\mu}\neq0$
and
$x_{\lambda}T_{w_{D}}y_{\mu}\mathcal H$
$=x_{\lambda}\mathcal Hy_{\mu}\mathcal H$.
As in the proof of \cite[Lemma~4.3]{DJa86},
there is 
an element $d\in\mathfrak X_{J(\mu)}\cap \mathfrak X_{J(\lambda')}^{-1}$
with $d^{-1} W_{J(\mu)} d = W_{J(\lambda')}$ and,
consequently,
$T_d^{-1} y_{\mu} T_d = y_{\lambda'}$.
So, $x_{\lambda}T_{w_D}y_{\mu}\mathcal H$
$=x_{\lambda}\mathcal Hy_{\mu}\mathcal H$
$=x_{\lambda}\mathcal Hy_{\lambda'}\mathcal H$
$=S^{\lambda}$.

The following is an immediate consequence of certain results in~\cite{MPa05} and~\cite{MPa15}.

\begin{result}\label{Result:JPAATh3.5}
Let $\lambda,\mu\vDash m$ with $\lambda''=\mu'$.
Also let $D$ be the unique diagram in $\mathcal{D}^{(\lambda,\mu)}$.
Define $\theta:M_{w_{J(\mu)}}\to S^\lambda\,(=x_\lambda T_{w_D}C_{w_{J(\mu)}}\mathcal H)$ by $m\mapsto x_\lambda T_{w_D}m$ ($m\in M_{w_{J(\mu)}}$).
Then\\
(i) $\theta$ is a surjective $\mathcal H$-module homomorphism with $\ker\theta=\hat M_{w_{J(\mu)}}$ (so $\theta$ induces a natural $\mathcal H$-module isomorphism between $S_{w_{J(\mu)}}$ and $S^\lambda$).
\\
(ii) The set $\{x_\lambda T_{w_D}C_w:\ w\sim_R w_{J(\mu)}\}$ is an $A$-basis for $S^\lambda$.
\end{result}
\begin{proof}
Clearly $\theta$ is a well-defined surjective $\mathcal H$-module homomorphism since $M_{w_{J(\mu)}}=C_{w_{J(\mu)}}\mathcal H$.
We know from the proof of~\cite[Theorem 4.6]{MPa15} that $\{C_w:\ w\in W$ and $w<_R w_{J(\mu)}\}\subseteq\ker\theta$.
It follows that $\hat M_{w_{J(\mu)}}\subseteq\ker\theta$.
Hence $\{x_\lambda T_{w_D}C_w:\ w\sim_R w_{J(\mu)}\}$ is an $A$-spanning set for $S^\lambda$.
Again from~\cite[Theorem 4.6]{MPa15} we know that the set $\{x_\lambda T_{w_D}C_w:\ w\sim_R w_{J(\mu)}\}$ is $F$-linearly independent and hence $A$-linearly independent.
This completes the proof of (ii).
To complete the proof of (i) we argue as in the proof of~\cite[Theorem 3.5]{MPa05}.
So we let $r\in\ker\theta$.
Then $r=m+\sum_{w\sim_Rw_{J(\mu)}}\alpha_wC_w$ for some $m\in\hat M_{w_{J(\mu)}}$ and $\alpha_w\in A$.
We can deduce that $\sum_{w\sim_Rw_{J(\mu)}}\alpha_w(C_w\theta)=0$ forcing all $\alpha_w$ in the previous sums to be equal to zero since, as we have seen, $\{C_w\theta:\ w\sim_R w_{J(\mu)}\}$ is $A$-linearly independent.
We conclude that $r=m\in\hat M_{w_{J(\mu)}}$ and hence $\ker\theta=\hat M_{w_{J(\mu)}}$.
\end{proof}

\begin{remark}\label{RemarkSpecht}
In view of Result~\ref{Result:JPAATh3.5}, and keeping the notation introduced there, we can see that $S_{w_{J(\mu)}}\cong S^\nu$, where $\nu$ is the partition of $m$ satisfying $\nu=\lambda''$ (since $\nu''=\nu=\lambda''=\mu'$).
From~\cite[\S5]{KLu79} (see also \cite[Corollary 5.8]{Gec05}), we also know that if $\mathfrak{C}$ is a (right) cell of $W=S_m$ with $\shape \mathfrak{C}=\nu\,(=\mu')$ then $S_{\mathfrak{C}}\cong S_{w_{J(\mu)}}$.
We can deduce that $S_{\mathfrak{C}}\cong S^\nu$ whenever $\mathfrak{C}$ is a right cell of $S_m$ with $\shape \mathfrak{C}=\nu$.

\end{remark}

\subsection{Kazhdan-Lusztig cells in $S_n$: induction and restriction}\label{Subsec:KLCellsInSn}


If $W$ is a Coxeter group, each cell of $W$ provides an integral representation of $W$; see
Kazhdan and Lusztig \cite[\S~1]{KLu79}. Barbasch and Vogan \cite{BVo83}
have addressed the question of induction and restriction of such
representations in relation to parabolic subgroups, where $W$ is a
Weyl group.
They showed that Kazhdan-Lusztig cells are compatible with parabolic subgroups so that the $C$-basis is compatible with induction and restriction of representations.
These results have been generalized to all Coxeter groups by
Roichman~\cite[Theorem~5.2]{Roi98} and Geck~\cite[Theorem 1]{Gec03}.

In the case of the symmetric group $S_m$, the Robinson-Schensted
correspondence gives a combinatorial method of identifying the
Kazhdan-Lusztig cells.
The Robinson-Schensted correspondence is a bijection of $S_m$
to the set of pairs of standard Young tableaux
$(\mathcal{P},\mathcal{Q})$ of the same shape and with $m$ entries,
where the shape of a tableau is the partition counting the number of
entries on each row.
See Fulton \cite{Ful97} or Sagan \cite{Sagan}
for a good description of this correspondence.
Denote this correspondence by
$w\mapsto(\mathcal{P}(w),\mathcal{Q}(w))$.
Then $\mathcal{Q}(w)=\mathcal{P}(w^{-1})$.
The \emph{shape} of $w$, denoted by $\shape{w}$, is defined to be
the common shape of the Young tableaux $\mathcal{P}(w)$ and
$\mathcal{Q}(w)$.
The tableaux $\mathcal{P}(w)$ and $\mathcal{Q}(w)$ are called
the \emph{insertion tableau} and the \emph{recording tableau},
respectively, for $w$.
\par
We will use $\unlhd$ to denote \emph{dominance} of partitions
(see \cite[p.26]{Ful97}) and use $\lhd$ to denote strict dominance.
\par
The following result characterises the cells in $S_m$;
see~\cite{KLu79} and also \cite{Ari00} or \cite{Gec05} for more detailed proofs.
%
\begin{result}
[{\cite{KLu79}, see also \cite[Theorem A]{Ari00} or \cite[Corollary 5.6]{Gec05}}]
\label{res:5c}
If $\mathcal{P}$ is a fixed standard Young tableau then
the set $\{w\in W\Colon \mathcal{P}(w)=\mathcal{P}\}$
is a left cell of $W$ and the set
$\{w\in W\Colon\mathcal{Q}(w)=\mathcal{P}\}$ is a right cell of $S_m$.
Conversely, every left cell and every right cell arises in this way.
Moreover, the two-sided cells are the subsets of $W$ of the form
$\{w\in W\Colon\shape\mathcal{P}(w)\mbox{ is a fixed partition.}\}$
\end{result}


The \emph{shape} $\shape{\mathfrak{C}}$ of a cell $\mathfrak{C}$ is
$\shape{w}$ for any $w\in\mathfrak{C}$.
\par
For the rest of this section,
$W'$ and $W$  will be the symmetric groups $S_{n}$ on $\{1,\dots,n\}$
and $S_{n+1}$ on $\{1,\ldots,n+1\}$, respectively, with the natural
embedding.
Let $s_i=(i,i+1)$ for $1\le i\le n$,
let $S'=\{s_1,\ldots,s_{n-1}\}$ and $S=\{s_1,\ldots,s_n\}$, so that
$(W',S')$ and $(W,S)$ are Coxeter systems.
Recalling the notation $\mathfrak{X}'=\mathfrak{X}_{S'}$,
we have $\mathfrak{X}'=\{x_i\colon 1\le i\le n+1\}$, where
$x_i=(i,i+1,\dots,n,n+1)=s_n\cdots s_i$ (the empty product is $1$ by convention).

For a Young diagram $D$ corresponding to the partition
$\lambda=(\lambda_1,\dots,\lambda_r)$, let $\ic(D)$ and $\oc(D)$ be the
sets of \emph{inner corners} and \emph{outer corners}, respectively,
of $D$; that is,
\[
\begin{array}{rcl}
\ic(D) & = &
\{(i,\lambda_i))\colon 1\le i\le r-1
\mbox{ where } \lambda_{i}>\lambda_{i+1}\}
\cup
\{(r,\lambda_r)\},
\\
\oc(D) & = &
\{(1,\lambda_1+1)\}
\cup
\{(i,\lambda_i+1))\colon
2\le i\le r \mbox{ where } \lambda_{i-1}>\lambda_i\}
\cup
\{(r+1,1)\}.
\end{array}
\]
We denote by $<$ the total order on the nodes of a diagram given by
$(i,j)<(i',j')$ if, and only if, $i<i'$ or $i=i'$ and $j<j'$.

Finally for this subsection we include some explicit results of the Barbasch and Vogan induction--restriction theorems for Kazhdan--Lusztig cells in the symmetric group as they were described in~\cite{MPa17}.

\begin{result}[{\cite[Proposition 2.1]{MPa17}}]
\label{prop:2.1c}
Let $\mathfrak{C}$ be a right cell of $W$,
let $A$ be the recording tableau of elements of $\mathfrak{C}$ and
let $D$ be its underlying diagram.
For each $k\in \oc(D)$, let $A_k$ be the tableau obtained from $A$ by
adding the entry $n+1$ at node $k$ and let $\mathfrak{C}_k$ be the
right cell of $W'$ corresponding to the recording tableau $A_k$.
Then $\mathfrak{C}\mathfrak{X}'=\bigcup_{k\in\oc(D)}\mathfrak{C}_k$.
\par
Furthermore, if $k,k'\in\oc(D)$ and $k<k'$ then
$\shape{\mathfrak{C}_{k'}}\lhd\shape{\mathfrak{C}_{k}}$.
\end{result}
\begin{result}[{\cite[Proposition 2.2]{MPa17}}]
\label{prop:2.2c}
Let $\mathfrak{C}$ be a right cell of $W'$ and
let $A$ be the recording tableau of elements of $\mathfrak{C}$
and let $D$ be its underlying diagram.
For each $k\in \ic(D)$, if $i(k)$ is the entry on the first row of $A$
removed by reverse inserting from node $k$ and $A'$ is the resulting
tableau, let $d_k=x_{i(k)}^{-1}$ and let $A_k=A'd_k$
(so that $A_k$ is a standard Young tableau on $1,\dots,n$).
Let $\mathfrak{C}_k$ be the right cell of $W$ corresponding to the
recording tableau $A_k$.
Then $\mathfrak{C}=\bigcup_{k\in \ic(D)}d_k\mathfrak{C}_k$.
\par
Furthermore, if $k,k'\in\ic(D)$ and $k<k'$ then
$\shape{\mathfrak{C}_{k}}\lhd\shape{\mathfrak{C}_{k'}}$
and $d_{k}\LEQ d_{k'}$.
\end{result}

\section{Specht filtrations}\label{Section:SpechtFiltrations}

\subsection{Kazhdan-Lusztig cell modules: induction and restriction}\label{Subsec:KLCellsModulesIndRestr}

We continue with $W$ a Coxeter group having $W'$ as a parabolic subgroup.
Recall our notation that  $\mathfrak{X}'$ (resp., $\mathfrak X^*$) is the set of
distinguished right (resp., left) coset representatives of $W'$ in $W$ and
$\mathcal{H}'$ (resp., $\mathcal{H}$) is the Hecke algebra of $W'$ (resp., $W$) over the ring $A$.
Then $\mathcal{H}'$ is a subalgebra of $\mathcal{H}$, and
$\mathcal{H}$ is a free left $\mathcal{H}'$-module with free
$\mathcal{H}'$-basis $\{T_w\colon w\in\mathfrak{X}'\}$ and, hence,
$\mathcal{H}$ is a flat left $\mathcal{H}'$-module
(see Lam~\cite[Proposition~4.3]{lam99}).
For $E$  a subset of $\mathcal H'$  we denote by $E\mathcal H'$ the right ideal of $\mathcal H'$ generated by $E$.
(Similarly for $E\mathcal H$ when $E\subseteq\mathcal H$.)
One can then easily observe that if $M$ is an ideal of $\mathcal{H}'$, then $M\mathcal{H}$ is an ideal
of $\mathcal{H}$, and
$M\mathcal{H}\cong M\otimes_{\mathcal{H}'}\mathcal{H}$.

In what follows, for $\mathfrak C'$ a right cell in $W'$ we denote by $M_{\mathfrak C'}$ (resp., $\hat M_{\mathfrak C'}$)
the right ideal $\langle C_w: w\LEQ_R'\mathfrak C'\rangle_A$ (resp., $\langle C_w: w<_R'\mathfrak C'\rangle_A$) of $\mathcal H'$
(recall our conventions for $\LEQ_R'$, $<_R'$ from Subsection~\ref{Subsection:HeckeABackgr}).
Similarly for $\mathfrak C$ a right cell in $W$ we will denote by $M_{\mathfrak C}$ (resp., $\hat M_{\mathfrak C}$)
the right ideal $\langle C_w: w\LEQ_R\mathfrak C\rangle_A$
(resp., $\langle C_w: w<_R\mathfrak C\rangle_A$)  of $\mathcal H$.
The corresponding cell modules are denoted by $S_{\mathfrak C'}$ and $S_{\mathfrak C}$ respectively.

Now let $\mathfrak{C}'$ be a right cell in $W'$.
As $M_{\mathfrak{C}'}$ and $\hat{M}_{\mathfrak{C}'}$ are right
$\mathcal{H}'$-ideals,
$M_{\mathfrak{C}'}\mathcal{H}$ and
$\hat{M}_{\mathfrak{C}'}\mathcal{H}$ are right $\mathcal{H}$-ideals,
and hence
$M_{\mathfrak{C}'}\mathcal{H}/\hat{M}_{\mathfrak{C}'}\mathcal{H}\cong
(M_{\mathfrak{C}'}\otimes_{\mathcal H'}\mathcal H)
/ (\hat M_{\mathfrak{C}'}\otimes_{\mathcal H'}\mathcal H)
\cong
(M_{\mathfrak{C}'}/\hat M_{\mathfrak{C}'})\otimes_{\mathcal H'}\mathcal H
\cong
S_{\mathfrak{C}'}\otimes_{\mathcal{H}'}\mathcal{H}$
as right $\mathcal{H}$-modules,
since $\mathcal{H}$ is a flat left $\mathcal{H}'$-module.

In this section, we investigate the construction of filtrations by cell modules for the
$\mathcal H$-module $S_{\mathfrak{C}'}\uparrow\mathcal H$ and
the $\mathcal H'$-module $S_{\mathfrak{C}}\downarrow\mathcal H'$
where $\mathfrak{C}'$ is a cell of $W'$ and $\mathfrak{C}$ is a cell
of~$W$ in the case $W$ is the symmetric group.
In view of Remark~\ref{RemarkSpecht} such filtrations are also Specht filtrations.

For the rest of the paper, unless explicitly mentioned otherwise,
$W'$ and $W$  will be the symmetric groups $S_{n}$ on $\{1,\dots,n\}$
and $S_{n+1}$ on $\{1,\ldots,n+1\}$, respectively, with the natural
embedding.

\begin{proposition}
\label{prop:4.1c}
Let $\mathfrak{C}$ be a right cell of $W'$.
Let $\lambda=\shape\mathfrak{C}$, let $D$ be the unique diagram in $\mathcal{D}^{(\lambda,\lambda')}$
and write $\oc(D)=\{k_1,\ldots,k_p\}$ where $k_j<k_i$ if $i<j$.
Then the induced $\mathcal H$-module
$S_{\mathfrak{C}}\uparrow\mathcal H$ has a filtration
$\{0\}=N_0\subseteq N_1\subseteq\ldots\subseteq N_p$ such that
$N_{i}/N_{i-1}\cong S_{\mathfrak{C}_i}$
(where, for simplicity, we write $\mathfrak{C}_i$ in the place of right cell $\mathfrak{C}_{k_i}$of $W$).
\end{proposition}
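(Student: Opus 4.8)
The plan is to build the filtration of $S_{\mathfrak{C}}\uparrow\mathcal H$ directly from the decomposition of the translated cell $\mathfrak{C}\mathfrak{X}'$ supplied by Result~\ref{prop:2.1c}. First I would identify $S_{\mathfrak{C}}\uparrow\mathcal H$ with $M_{\mathfrak{C}}\mathcal H/\hat M_{\mathfrak{C}}\mathcal H$ using the isomorphism $S_{\mathfrak{C}}\otimes_{\mathcal H'}\mathcal H\cong M_{\mathfrak{C}}\mathcal H/\hat M_{\mathfrak{C}}\mathcal H$ established earlier via flatness. Under this identification the module is spanned by the images of the $C_w T_x$ with $w\sim_R'\mathfrak{C}$ and $x\in\mathfrak{X}'$; equivalently, since $\mathcal H=\bigoplus_{x\in\mathfrak{X}'}\mathcal H' T_x$ as a free left $\mathcal H'$-module, the relevant $C$-basis elements of $\mathcal H$ are those $C_y$ with $y$ ranging over the set $\mathfrak{C}\mathfrak{X}'$ (up to lower-order terms $y<_R\mathfrak{C}_i$).

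The key structural input is Result~\ref{prop:2.1c}: writing $\oc(D)=\{k_1,\dots,k_p\}$ with the cells $\mathfrak{C}_{k_i}$ of $W$ and the strict shape relations $\shape\mathfrak{C}_{k_1}\rhd\shape\mathfrak{C}_{k_2}\rhd\cdots\rhd\shape\mathfrak{C}_{k_p}$ coming from the ordering $k_j<k_i$ when $i<j$, we have the disjoint union $\mathfrak{C}\mathfrak{X}'=\bigcup_{i=1}^p\mathfrak{C}_{k_i}$. I would then define $N_i$ to be the image in $S_{\mathfrak{C}}\uparrow\mathcal H$ of the $A$-span of all $C_y$ with $y\LEQ_R\mathfrak{C}_{k_j}$ for some $j\le i$ — intuitively the submodule coming from the ``first $i$'' of the constituent cells, those of largest shapes. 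The forward-reference strict dominance $\nu^{(i)}\lhd\nu^{(i+1)}$ promised in the introduction then corresponds to indexing so that smaller shapes sit at the bottom of the filtration.

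The heart of the argument, and the main obstacle, is showing that each $N_i$ is genuinely an $\mathcal H$-submodule and that the successive quotient $N_i/N_{i-1}$ is isomorphic to the cell module $S_{\mathfrak{C}_{k_i}}$ rather than merely a subquotient of it. For the submodule property I would exploit that right multiplication by $\mathcal H$ only moves $C_y$ to terms $C_x$ with $x\LEQ_R y$ (equation~\eqref{eqn:3a}), so the span defined by a down-closed-under-$\LEQ_R$ set of cells is automatically $\mathcal H$-stable — this requires checking that the union of the cones $\{y:y\LEQ_R\mathfrak{C}_{k_j}\}$ for $j\le i$ is itself $\LEQ_R$-downward closed, which follows because any $x<_R\mathfrak{C}_{k_i}$ must lie either in a lower-shape cell or in one of the $\mathfrak{C}_{k_j}$ with $j>i$ (necessitating a careful use of the shape-versus-$\LEQ_{LR}$ compatibility, and here Result~\ref{res:2c} on crystallographic groups should rule out the bad case where $x\sim_{LR}\mathfrak{C}_{k_i}$ but $x$ escapes the intended set). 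For the quotient identification I would verify that $N_i/N_{i-1}$ has the $A$-basis given by the images of $\{C_y:y\in\mathfrak{C}_{k_i}\}$ and that the $\mathcal H$-action on these images reproduces exactly the defining action on the cell module $S_{\mathfrak{C}_{k_i}}=M_{\mathfrak{C}_{k_i}}/\hat M_{\mathfrak{C}_{k_i}}$; the rank count matches because $\dim S_{\mathfrak{C}}\uparrow\mathcal H=|\mathfrak{C}\mathfrak{X}'|=\sum_i|\mathfrak{C}_{k_i}|=\sum_i\dim S_{\mathfrak{C}_{k_i}}$, so once each quotient is shown to be a homomorphic image of $S_{\mathfrak{C}_{k_i}}$ of the full dimension, the isomorphisms follow and the filtration is forced to be exact.
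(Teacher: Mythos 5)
Your proposal follows essentially the same route as the paper's proof: identify $S_{\mathfrak{C}}\uparrow\mathcal H$ with $M_{\mathfrak{C}}\mathcal H/\hat M_{\mathfrak{C}}\mathcal H$ via flatness, invoke Result~\ref{prop:2.1c} to write $\mathfrak{C}\mathfrak{X}'=\bigcup_{i}\mathfrak{C}_{k_i}$ with pairwise distinct shapes, filter by the partial unions of these cells, use the compatibility of $\LEQ_{R}$ with $\LEQ_{LR}$ (hence with shapes) to see that each stage is $\LEQ_{R}$-downward closed, and read off the quotients as cell modules by a dimension count. Two points need attention. First, you have the ordering backwards in places: with the indexing $k_j<k_i$ for $i<j$, Result~\ref{prop:2.1c} gives $\shape\mathfrak{C}_{k_1}\lhd\shape\mathfrak{C}_{k_2}\lhd\cdots\lhd\shape\mathfrak{C}_{k_p}$ (not $\rhd$), and an element $x<_{R}\mathfrak{C}_{k_i}$ lying in $\mathfrak{C}\mathfrak{X}'$ must lie in some $\mathfrak{C}_{k_j}$ with $j<i$ (not $j>i$); since the whole construction hinges on the implication ``$\mathfrak{C}_{k_i}<_{R}\mathfrak{C}_{k_j}$ implies $i<j$'', which is what makes each $N_i$ a submodule with $N_i/N_{i-1}$ exactly (and not more than) $S_{\mathfrak{C}_{k_i}}$, these signs must be fixed; your closing remark that the dominance-smaller shapes sit at the bottom is the correct version. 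Second, the step you describe as ``up to lower-order terms'' is where the real technical input sits: one needs Geck's Lemma~2.2 and Corollary~3.4 to identify the $C$-basis of $M_{\mathfrak{C}}\mathcal H$ as $\{C_{yv}\colon y\LEQ_{R}'\mathfrak{C},\ v\in\mathfrak{X}'\}$, and then the `right' version of \cite[\S4($\dagger$)]{Gec03} to see that any $x\LEQ_{R}\mathfrak{C}_{k_j}$ with $x\notin\mathfrak{C}\mathfrak{X}'$ factors as $x=y'v'$ with $y'<_{R}'\mathfrak{C}$, hence $C_x\in\hat M_{\mathfrak{C}}\mathcal H$ and dies in the quotient. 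Finally, Result~\ref{res:2c} is not actually needed here (the paper reserves it for the restriction case): since distinct outer corners yield distinct shapes, any relation $\mathfrak{C}_{k_i}\LEQ_{LR}\mathfrak{C}_{k_j}$ with $i\neq j$ is automatically strict, and \cite[Theorem~5.1]{Gec05} already pins down its direction.
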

\begin{proof}
First note that the sets
$\{w\in W'\colon w\LEQ_R'\mathfrak{C}\}$ and
$\{w\in W'\colon  w<_R'\mathfrak{C}\}$ both satisfy the hypothesis of
Geck~\cite[Lemma~2.2 and Corollary~3.4]{Gec03}.
Hence,
$M_{\mathfrak{C}}\mathcal H$
$=\langle C_{yv}\colon y\LEQ_R'\mathfrak{C}, v\in\mathfrak X'\rangle_A$
$=\langle C_yT_v\colon y\LEQ_R'\mathfrak{C}, v\in\mathfrak X'\rangle_A$
and $\hat M_{\mathfrak{C}'}\mathcal H$
$=\langle C_{yv}\colon y<_R'\mathfrak{C}, v\in\mathfrak X'\rangle_A$
$=\langle C_yT_v\colon y<_R'\mathfrak{C}, v\in\mathfrak X'\rangle_A$.
We write
$\mathfrak D=\{w\in W\colon w=yv$ for some
$v\in\mathfrak X'$ and $y\in W'$ with $y<_R'\mathfrak{C}\}$.
So, $\hat M_{\mathfrak{C}}\mathcal H$
$=\langle C_{w}\colon w\in\mathfrak{D}\rangle_A$.
Let
$L_0=\hat M_{\mathfrak{C}}\mathcal H$.

From Result~\ref{prop:2.1c}, we see that
Then $\mathfrak{C}\mathfrak{X}'=\bigcup_{k\in\oc(D)}\mathfrak{C}_k$,
where $\oc(D)$ is the set of outer corners of $D$ and, for each
$k\in\oc(D)$, $\mathfrak{C}_k$ is a right cell of $W$.
Moreover, we can write  $\oc(D)=\{k_1,\ldots,k_p\}$ where $k_j<k_i$
if $i<j$. So,
$\shape{\mathfrak{C}_{k_i}}\lhd\shape{\mathfrak{C}_{k_j}}$ if $i<j$.
From \cite[Theorem~5.1]{Gec05}, we see that
$\mathfrak{C}_{k_i}<_{LR}\mathfrak{C}_{k_j}$ if, and only if, $i<j$.
It follows that if $\mathfrak{C}_{k_i}<_{R}\mathfrak{C}_{k_j}$
then $i<j$.

For $1\le j\le p$,
let $L_j=\langle C_w\colon w\in\mathfrak{D}\cup
\bigcup_{1\le i\le j}\mathfrak{C}_{k_i}\rangle_A$.
Suppose that $w'\in W$ and $w'\LEQ_{R}\mathfrak{C}_{k_j}$;
that is, $w'\LEQ_{R}w$ for some $w\in\mathfrak{C}_{k_j}$.
If $w'\in\mathfrak{C}\mathfrak{X}'$, then
$w'\sim_{R}\mathfrak{C}_{k_i}$ for some $i$ with $1\le i\le j$.
If $w'\notin\mathfrak{C}\mathfrak{X}'$, then
$w'=y'v'$ where $y'\in W'$, $y'\notin\mathfrak{C}$, and
$v'\in\mathfrak{X}'$.
Writing $w=yv$ where $y\in W'$ and $v\in\mathfrak{X}'$,
by the `right' version of \cite[\S4($\dagger$)]{Gec03}, $y'\LEQ_R' y$.
As $y'\not\sim_R' y$, $y'<_R'\mathfrak{C}$ and $w'\in\mathfrak{D}$.
Since $i<j$ whenever $\mathfrak{C}_{k_i}<_R\mathfrak{C}_{k_j}$ we have thus shown that $w'\LEQ_{R}\mathfrak{C}_{k_j}$ implies $w'\in L_j$.
In view of the fact that $L_i\subseteq L_j$ for $i\le j$ we get that $L_j$ is an $\mathcal H$-module containing
$M_{\mathfrak{C}_{k_j}}$,
$L_{j-1}$ is an $\mathcal H$-module containing
$\hat M_{\mathfrak{C}_{k_j}}$,
and
$L_j/L_{j-1}\cong
M_{\mathfrak{C}_{k_j}}/\hat M_{\mathfrak{C}_{k_j}}
=S_{\mathfrak{C}_{k_j}}$.

By construction, $L_p=M_{\mathfrak{C}}\mathcal H$.
Hence, $L_p/L_0=M_{\mathfrak{C}}\mathcal{H}/\hat{M}_{\mathfrak{C}}\mathcal{H}
\cong
(M_{\mathfrak{C}}/\hat M_{\mathfrak{C}})\otimes_{\mathcal H'}\mathcal H
\cong S_{\mathfrak{C}}\otimes_{\mathcal H'}\mathcal H$
$=S_{\mathfrak{C}}\uparrow\mathcal H$.
\par
To complete the proof, we let $N_j=L_j/L_0$ for $0\le j\le p$.
\end{proof}

Before establishing the corresponding result for the restricted module
of a cell module, we prove the following technical lemma.
Note that results~\ref{lem:4.2a} and \ref{cor:4.3a}  below are true for $W$
an arbitrary Coxeter group with $W'$ a parabolic subgroup of~$W$.
It would be useful at this point to recall equations~\eqref{eqn:3a} and~\eqref{eqn:3'} and the relevant notation introduced in Subsection~\ref{Subsection:HeckeABackgr}.


\begin{lemma}[{compare~\cite[Proposition 3.3]{Gec03}, \cite[Corollary 3.5]{Gec06b}}]
\label{lem:4.2a}
Let $W$ be a Coxeter group and $W'$ a parabolic subgroup of~$W$.
Also let  $y\in\mathfrak{X}^*$, $v\in W'$ and $h\in\mathcal{H}'$.
Then
\begin{equation}
\label{eqn:4a}
C_{yv}'h=\sum_{u\LEQ_{R}' v}\alpha_{v,h,u}C_{yu}'
+\sum_{x<y,\ u\LEQ_{LR}'v,\ xu\LEQ_{R}yv}\beta_{yv,h,xu}C_{xu}'
\end{equation}
where $\beta_{yv,h,xu}\in A$ and $\alpha_{v,h,u}$ is as in
equations~\Bref{eqn:3a} and~\Bref{eqn:3'}.
\end{lemma}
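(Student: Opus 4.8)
The plan is to expand $C'_{yv}h$ using the decomposition of $\mathcal{H}$ as a free left $\mathcal{H}'$-module with basis $\{T_w : w \in \mathfrak{X}'\}$, but the cleaner route is to work directly from the multiplication formula~\eqref{eqn:3a}/\eqref{eqn:3'} together with the behaviour of the $C'$-basis under the parabolic structure. Since $y \in \mathfrak{X}^*$, every element $yu$ with $u \in W'$ is a reduced product (lengths add), and the factorisation $w = x \cdot u$ of an arbitrary $w \in W$ into its $\mathfrak{X}^*$-part $x$ and $W'$-part $u$ is what organises the sum on the right. First I would write $C'_{yv}h = \sum_{w \LEQ_R yv} \alpha_{yv,h,w} C'_w$ from~\eqref{eqn:3a}, then split the index set according to whether $w$ has $\mathfrak{X}^*$-component equal to $y$ or strictly below $y$ in Bruhat order.

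The key input is the result of Geck~\cite[Proposition 3.3]{Gec03} (the ``right'' version of~\cite[\S4($\dagger$)]{Gec03} already invoked in the proof of Proposition~\ref{prop:4.1c}), which controls how the $C'$-basis indexed by $\mathfrak{X}^* W'$ multiplies: for $y \in \mathfrak{X}^*$ the leading terms of $C'_{yv} h$ (those with $\mathfrak{X}^*$-component exactly $y$) reproduce precisely the multiplication inside $\mathcal{H}'$, namely $\sum_{u \LEQ_R' v} \alpha_{v,h,u} C'_{yu}$ with the very same structure constants $\alpha_{v,h,u}$ appearing in~\eqref{eqn:3'}. This is the heart of the matter and is exactly the content I would cite rather than rederive. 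For the first sum I therefore need: that $h \in \mathcal{H}'$ acts on the ``$y$-layer'' as it does in $\mathcal{H}'$, and that the relevant indices $u$ satisfy $u \LEQ_R' v$, which is immediate from~\eqref{eqn:3'}.

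For the remainder terms I would argue as follows. Any $w$ occurring with $w \LEQ_R yv$ but whose $\mathfrak{X}^*$-component $x$ is not equal to $y$ must satisfy $x < y$: indeed $w \LEQ_R yv$ forces $w \LEQ yv$ in Bruhat order (right descents only go down), and combined with the factorisation this yields $x \LEQ y$, whence $x < y$ since $x \neq y$. Writing $w = xu$ with $u \in W'$, the constraint $xu \LEQ_R yv$ is retained verbatim in the second sum. The condition $u \LEQ_{LR}' v$ comes from tracking the $W'$-component: multiplication by $h \in \mathcal{H}'$ can only move the $W'$-part within the two-sided order of $W'$, which is the natural bound available once one drops the finer left-cell information upon changing the $\mathfrak{X}^*$-component. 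I would obtain this by comparing with~\cite[Corollary 3.5]{Gec06b}, which packages exactly this $\LEQ_{LR}'$ control.

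The main obstacle I expect is justifying the precise index set $\{x < y,\ u \LEQ_{LR}' v,\ xu \LEQ_R yv\}$ for the error terms, and in particular the appearance of $\LEQ_{LR}'$ (rather than the weaker ``$u \in W'$'' or the stronger $\LEQ_R'$) on the $W'$-component. The subtlety is that once the $\mathfrak{X}^*$-part drops strictly below $y$, the action of $h$ no longer factors cleanly through $\mathcal{H}'$, so one loses the right-cell bound and is left only with the two-sided bound; establishing that this weaker bound genuinely holds—and no artificial terms survive—is where the cited results of Geck do the real work. I would structure the proof to reduce entirely to those two citations, verifying only the Bruhat-order bookkeeping ($x < y$ and the reduced factorisation $l(yu) = l(y) + l(u)$) by hand, since that part is elementary given $y \in \mathfrak{X}^*$.
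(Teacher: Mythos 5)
Your overall strategy---expand $C'_{yv}h$ via~\eqref{eqn:3a}, split the index set according to the $\mathfrak{X}^*$-component, and control the leading layer via Geck's Proposition~3.3---is in the same spirit as the paper's proof, but the step you use to force $x<y$ on the error terms contains a genuine error. You assert that $w\LEQ_R yv$ implies $w\LEQ yv$ in the Bruhat order (``right descents only go down''). This is false: the Kazhdan--Lusztig preorder $\LEQ_R$ does not refine the Bruhat order. Already for a single generator $s$ one has $C'_eT_s=q^{1/2}C'_s-C'_e$, so $s\LEQ_R e$ while $s\not\LEQ e$. Consequently your argument gives no bound at all on the $\mathfrak{X}^*$-component of the terms appearing in $C'_{yv}h$, and the decomposition into an ``$x=y$'' layer and an ``$x<y$'' remainder is not justified by~\eqref{eqn:3a} alone.

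The paper obtains $x<y$ from a different mechanism: the triangularity, with respect to the Bruhat order on $\mathfrak{X}^*$, of the change of basis between $\{C'_{xu}\}$ and $\{\tilde T_xC'_u\}$. Geck's Proposition~3.3 is precisely the basis-change formula $C'_{yv}=\tilde T_yC'_v+\sum_{x<y,\ u\LEQ_L'v}\zeta_{yv,xu}\tilde T_xC'_u$ (equation~\eqref{eqn:6a}); inverting it gives~\eqref{eqn:7a}. One then computes $C'_{yv}h$ in the $\tilde T_xC'_u$ basis, where right multiplication by $h\in\mathcal{H}'$ preserves each $x$-layer and acts on the $W'$-component exactly by~\eqref{eqn:3'}, and converts back to the $C'$-basis; both basis changes only introduce terms with strictly smaller $\mathfrak{X}^*$-component, and this is where $x<y$ really comes from. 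The condition $u\LEQ_{LR}'v$ then falls out by composing $\LEQ_L'$ (from~\eqref{eqn:6a}), $\LEQ_R'$ (from~\eqref{eqn:3'}) and $\LEQ_L'$ again (from~\eqref{eqn:7a}), with no appeal to~\cite{Gec06b} needed, while the constraint $xu\LEQ_Ryv$ is imposed at the end by intersecting with the index set of~\eqref{eqn:5a}, as you correctly anticipate. Note finally that Proposition~3.3 of~\cite{Gec03} is only a basis-change statement, not a statement about multiplication by $h$, so citing it for the claim that the leading terms ``reproduce the multiplication inside $\mathcal{H}'$ with the same structure constants'' assumes essentially what the lemma is proving; that identification of structure constants is exactly what the computation through the $\tilde T_xC'_u$ basis establishes.
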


\begin{proof}
The usual $C'$-basis of $\mathcal{H}$ can be written as
$\{C_{xu}'\colon x\in\mathfrak{X}^*, u\in W'\}$.
Comparing with~\eqref{eqn:3a},
since $y\in\mathfrak{X}^*$, $v\in W'$ and $h\in\mathcal{H}'$,
by hypothesis,
\begin{equation}
\label{eqn:5a}
C_{yv}'h=\sum_{x\in\mathfrak{X}^*,\ u\in W',\ xu\LEQ_{R}yv}
\alpha_{yv,h,xu}C_{xu}'
\end{equation}
where $\alpha_{yv,h,xu}\in A$.
Moreover, $\{\tilde T_xC_u'\colon x\in\mathfrak{X}^*, u\in W'\}$ is also
an $A$-basis for $\mathcal{H}$;
see~\cite[Section~3]{Gec03} and the discussion after
Result~\ref{res:1a}.
Geck shows in~\cite[Proposition~3.3]{Gec03} that, for
any $y\in\mathfrak{X}^*$ and $v\in W'$,
\begin{equation}
\label{eqn:6a}
C_{yv}'=\tilde T_{y}C_{v}'+
\sum_{x\in\mathfrak{X}^*,\ u\in W',\ x<y,\ u\LEQ_{L}' v}
\zeta_{yv,xu}\tilde T_{x}C_{u}'
\end{equation}
where $\zeta_{yv,xu}\in A$.
We can invert this system of equations to get
\begin{equation}
\label{eqn:7a}
\tilde T_{y}C_{v}'=C_{yv}'+
\sum_{x\in\mathfrak{X}^*,\ u\in W',\ x<y,\ u\LEQ_{L}' v}
\eta_{yv,xu}C_{xu}'
\end{equation}
where $\eta_{yv,xu}\in A$.

In the following computation, $x,x'\in\mathfrak{X}^*$ and
$u,u',u''\in W'$.
Taking $h\in\mathcal{H}'$, $y\in\mathfrak{X}^*$ and $v\in W'$
and using equation~\Bref{eqn:3a}, we get
\[
\begin{array}{rcl}
C_{yv}'h
&=&
\tilde T_{y}C_{v}'h+
\sum_{x<y,\ u\LEQ_{L}' v}\zeta_{yv,xu}\tilde T_{x}C_{u}'h
\\[10pt]
&=&
\tilde T_{y}
\left(\sum_{u\LEQ_{R}' v}\alpha_{v,h,u}C_{u}'\right)
+
\sum_{x<y,\ u\LEQ_{L}' v}
\zeta_{yv,xu}\tilde T_{x}
\left(\sum_{u'\LEQ_{R}' u}\alpha_{u,h,u'}C_{u'}'\right)
\\[10pt]
&=&
\sum_{u\LEQ_{R}' v}\alpha_{v,h,u}\tilde T_{y}C_{u}'
+
\sum_{x<y,\ u'\LEQ_{LR}' v}
\xi_{yv,h,xu'}\tilde T_{x}C_{u'}'
\end{array}
\]
where $\xi_{yv,h,xu'}\in A$.
Hence,
\[
\begin{array}{rcl}
C_{yv}'h
&=&
\sum_{u\LEQ_{R}' v}\alpha_{v,h,u}
\left(C_{yu}'+
\sum_{x'<y,\ u'\LEQ_{L}' u}
\eta_{yu,x'u'}C_{x'u'}'
\right)
\\
&&
+
\sum_{x<y,\ u'\LEQ_{LR}' v}
\xi_{yv,h,xu'}
\left(C_{xu'}'+
\sum_{x'<x,\ u''\LEQ_{L}' u'}
\eta_{xu',x'u''}C_{x'u''}'
\right)
\\
&=&
\sum_{u\LEQ_{R}' v}\alpha_{v,h,u}C_{yu}'
+
\sum_{x<y,\ u\LEQ_{LR}' v}
\beta_{yv,h,xu}C_{xu}'
\end{array}
\]
which, combined with~\Bref{eqn:5a}, gives the required result.
\end{proof}

\begin{corollary}
\label{cor:4.3a}
Under the hypothesis of Lemma~\ref{lem:4.2a} and with $\alpha_{v,h,u}$ and $\beta_{yv,h,xu}$ as defined therein, we have
\begin{eqnarray}
C_{v}h&=&\sum_{u\LEQ_{R}'v}\lambda_{v,h,u}C_{u}\hspace{2em}\mbox{and}
\label{eqn:8a}
\\
C_{yv}h&=&\sum_{u\LEQ_{R}' v}\lambda_{v,h,u}C_{yu}
+\sum_{x<y,\ u\LEQ_{LR}'v,\ xu\LEQ_{R}yv}\mu_{yv,h,xu}C_{xu}
\label{eqn:9a}
\end{eqnarray}
where
$\lambda_{v,h,u}=(-1)^{l(u)-l(v)}
\overline{\alpha_{v,h\jmath_{\mathcal{H}'},u}}\in A$,
$\mu_{yv,h,xu}=(-1)^{l(xu)-l(yv)}
\overline{\beta_{yv,h\jmath_{\mathcal{H}'},xu}}\in A$.
\end{corollary}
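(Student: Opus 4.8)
The plan is to derive both identities from Lemma~\ref{lem:4.2a} by applying the ring involution $\jmath_{\mathcal H}$, exploiting the fact that it interchanges the $C$- and $C'$-bases. Recall from the discussion after Result~\ref{res:1a} that $C'_w=(-1)^{l(w)}C_w\jmath_{\mathcal H}$; since $\jmath_{\mathcal H}$ is an involution, this is equivalent to $C_w=(-1)^{l(w)}C'_w\jmath_{\mathcal H}$, and in particular $C'_{yu}\jmath_{\mathcal H}=(-1)^{l(yu)}C_{yu}$. Recall also that $\jmath_{\mathcal H}$ is a ring homomorphism that is semilinear over the bar automorphism of $A$, i.e.\ $(ab)\jmath_{\mathcal H}=(a\jmath_{\mathcal H})(b\jmath_{\mathcal H})$ and $(\alpha h)\jmath_{\mathcal H}=\overline{\alpha}\,(h\jmath_{\mathcal H})$ for $\alpha\in A$; and that, because the length function of $W'$ is the restriction of that of $W$, the restriction of $\jmath_{\mathcal H}$ to $\mathcal H'$ is exactly $\jmath_{\mathcal H'}$.

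To obtain~\eqref{eqn:9a}, I would apply Lemma~\ref{lem:4.2a} not to $h$ but to $h\jmath_{\mathcal H'}\in\mathcal H'$, and then apply $\jmath_{\mathcal H}$ to both sides of the resulting identity. On the left-hand side, the ring-homomorphism property gives $\bigl(C'_{yv}(h\jmath_{\mathcal H'})\bigr)\jmath_{\mathcal H}=(C'_{yv}\jmath_{\mathcal H})\bigl((h\jmath_{\mathcal H'})\jmath_{\mathcal H'}\bigr)=(-1)^{l(yv)}C_{yv}h$, using $\jmath_{\mathcal H'}^2=\mathrm{id}$. On the right-hand side, semilinearity replaces each coefficient by its bar-conjugate and turns $C'_{yu}$ and $C'_{xu}$ into $(-1)^{l(yu)}C_{yu}$ and $(-1)^{l(xu)}C_{xu}$ respectively. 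Multiplying through by $(-1)^{l(yv)}$ and collecting the sign on each term then yields~\eqref{eqn:9a}, provided the signs simplify to the claimed exponents.

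The one point requiring care is precisely this sign bookkeeping. Since $y,x\in\mathfrak X^*$ are distinguished left coset representatives and $u,v\in W'$, we have the length additivities $l(yv)=l(y)+l(v)$, $l(yu)=l(y)+l(u)$ and $l(xu)=l(x)+l(u)$. Hence the total sign on the $C_{yu}$-term is $(-1)^{l(yv)+l(yu)}=(-1)^{2l(y)+l(v)+l(u)}=(-1)^{l(u)-l(v)}$, matching the definition of $\lambda_{v,h,u}$; here $\overline{\alpha_{v,h\jmath_{\mathcal H'},u}}$ appears because the coefficient $\alpha_{v,\,\cdot\,,u}$ of Lemma~\ref{lem:4.2a} was evaluated at $h\jmath_{\mathcal H'}$. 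An entirely analogous computation gives the sign $(-1)^{l(yv)+l(xu)}=(-1)^{l(xu)-l(yv)}$ on the $C_{xu}$-term, matching $\mu_{yv,h,xu}$. Finally, identity~\eqref{eqn:8a} is obtained the same way but working entirely inside $\mathcal H'$: apply $\jmath_{\mathcal H'}$ to equation~\eqref{eqn:3'} with $h$ replaced by $h\jmath_{\mathcal H'}$ (equivalently, specialize~\eqref{eqn:9a} to $y=1$, for which the second sum is empty since no $x<1$ exists). I expect the main obstacle to be purely this careful tracking of the $(-1)$-powers through the semilinear involution, rather than any conceptual difficulty.
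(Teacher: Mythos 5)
Your proposal is correct and follows essentially the same route as the paper: apply the lemma to $h\jmath_{\mathcal H'}$, hit both sides with the semilinear involution $\jmath_{\mathcal H}$ (whose restriction to $\mathcal H'$ is $\jmath_{\mathcal H'}$), and track the signs via $C'_w\jmath_{\mathcal H}=(-1)^{l(w)}C_w$. Your explicit use of the length additivity $l(yv)=l(y)+l(v)$ for $y\in\mathfrak X^*$, $v\in W'$ to reduce $(-1)^{l(yu)-l(yv)}$ to $(-1)^{l(u)-l(v)}$ is exactly the step the paper leaves implicit.
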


\begin{proof}
Let $h'=h\jmath_{\mathcal{H}'}\in\mathcal{H}'$ and apply
$\jmath_{\mathcal{H}'}$ to the equation
$C_{y}'h'=\sum_{x\LEQ_{R}' y}\alpha_{y,h',x}C_{x}'$ obtained from~\eqref{eqn:3a} by replacing $h$ by $h'$, to get
$(-1)^{l(y)}C_{y}h$
$=\sum_{x\LEQ_{R}' y}\overline{\alpha_{y,h',x}}(-1)^{l(x)}C_{x}$.
Equation~\eqref{eqn:8a} follows.
Similarly, applying $\jmath_{\mathcal{H}}$ to the equation
obtained from equation~\eqref{eqn:4a} by replacing $h$ with $h'$ and
noting that $\jmath_{\mathcal{H}'}$ is the automorphism of
$\mathcal{H}'$ obtained by restricting $\jmath_{\mathcal{H}}$
to $\mathcal{H}'$, we get
$(-1)^{l(yv)}C_{yv}h$
$=\sum_{u\LEQ_{R}' v}\overline{\alpha_{v,h,u}}(-1)^{l(yu)}C_{yu}
+\sum_{x<y,\ u\LEQ_{LR}'v,\ xu\LEQ_{R}yv}
\overline{\beta_{yv,h,xu}}(-1)^{l(xu)}C_{xu}$.
Equation~\eqref{eqn:9a} follows.
\end{proof}

%
%


\begin{proposition}
\label{prop:4.3c}
Let $\mathfrak{C}$ be a right cell of $W$.
Let $\lambda=\shape\mathfrak{C}$, let $D$ be the unique diagram in $\mathcal{D}^{(\lambda,\lambda')}$ and write
$\ic(D)=\{k_1,\ldots,k_p\}$ where $k_i<k_j$ if $i<j$.
Then the restricted $\mathcal H'$-module
$S_{\mathfrak{C}}\downarrow\mathcal H'$ has a filtration
$\{0\}=N_0\subseteq N_1\subseteq\ldots\subseteq N_p$ such that
$N_{i}/N_{i-1}\cong S_{\mathfrak{C}_i}$
(where, for simplicity, we write $\mathfrak{C}_i$ in place of $\mathfrak{C}_{k_i}$).
\end{proposition}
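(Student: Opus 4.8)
The plan is to build the filtration directly inside the cell module $S_{\mathfrak C}$, exploiting the decomposition of $\mathfrak C$ supplied by Result~\ref{prop:2.2c} together with the triangular restriction formula of Corollary~\ref{cor:4.3a}; this parallels the proof of Proposition~\ref{prop:4.1c} but is carried out on the quotient module rather than on the ambient ideals. By Result~\ref{prop:2.2c} we have the disjoint union $\mathfrak C=\bigcup_{i=1}^{p} d_{k_i}\mathfrak C_{k_i}$, where each $\mathfrak C_{k_i}$ is a right cell of $W'$, the elements $d_{k_i}=x_{i(k_i)}^{-1}$ lie in $\mathfrak X^*$, and $d_{k_i}\LEQ d_{k_j}$ (Bruhat order) together with $\shape\mathfrak C_{k_i}\lhd\shape\mathfrak C_{k_j}$ whenever $i<j$. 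Since $d_{k_i}\in\mathfrak X^*$ and $\mathfrak C_{k_i}\subseteq W'$, for $u\in\mathfrak C_{k_i}$ the product $d_{k_i}u$ is precisely the distinguished left-coset factorisation of $d_{k_i}u\in\mathfrak C$. Recalling that $S_{\mathfrak C}=M_{\mathfrak C}/\hat M_{\mathfrak C}$ has $A$-basis $\{\bar C_w\colon w\in\mathfrak C\}$ (the bar denoting image in $S_{\mathfrak C}$, so $\bar C_w=0$ for $w<_R\mathfrak C$), this basis is thereby partitioned into the $p$ blocks $\{\bar C_{d_{k_i}u}\colon u\in\mathfrak C_{k_i}\}$.

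For $0\le j\le p$ I would set
\[
N_j=\langle\, \bar C_{d_{k_i}u}\colon 1\le i\le j,\ u\in\mathfrak C_{k_i}\,\rangle_A\subseteq S_{\mathfrak C},
\]
a free $A$-module spanned by a sub-collection of the basis, so that $N_0=\{0\}$ and $N_p=S_{\mathfrak C}$. The crux is to show each $N_j$ is stable under $\mathcal H'$. Fixing $h\in\mathcal H'$ and $u\in\mathfrak C_{k_i}$ with $i\le j$, I apply equation~\eqref{eqn:9a} with $y=d_{k_i}$ and $v=u$ and pass to $S_{\mathfrak C}$, obtaining
\[
\bar C_{d_{k_i}u}h=\sum_{u'\LEQ_R'u}\lambda_{u,h,u'}\,\bar C_{d_{k_i}u'}
+\sum_{x<d_{k_i},\ u'\LEQ_{LR}'u,\ xu'\LEQ_R d_{k_i}u}\mu_{d_{k_i}u,h,xu'}\,\bar C_{xu'}.
\]
In the first sum only the terms with $u'\sim_R'u$ survive: for $u'<_R'\mathfrak C_{k_i}$ one has $u'\notin\mathfrak C_{k_i}$, so $d_{k_i}u'\notin\mathfrak C$ and hence $\bar C_{d_{k_i}u'}=0$; the surviving terms remain in block $i$. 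In the second sum each index has $x\in\mathfrak X^*$, so $\bar C_{xu'}\neq0$ forces $xu'\in\mathfrak C$, and by uniqueness of the distinguished left-coset factorisation $x=d_{k_{j'}}$ and $u'\in\mathfrak C_{k_{j'}}$ for some $j'$. Since $d_{k_i}\LEQ d_{k_{j'}}$ whenever $i\le j'$, the strict inequality $x=d_{k_{j'}}<d_{k_i}$ rules out $j'\ge i$, so $j'<i$; thus the surviving second-sum terms lie in blocks strictly below $i$. Consequently $\bar C_{d_{k_i}u}h\in N_i\subseteq N_j$, which proves $N_jh\subseteq N_j$.

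It then remains to identify the factors. Reducing the displayed formula modulo $N_{j-1}$ annihilates the entire second sum (its surviving terms lie in blocks $j'<j$), leaving
\[
\bar C_{d_{k_j}u}h\equiv\sum_{u'\sim_R'u}\lambda_{u,h,u'}\,\bar C_{d_{k_j}u'}\pmod{N_{j-1}}.
\]
Comparing with equation~\eqref{eqn:8a}, which governs the $\mathcal H'$-action on $S_{\mathfrak C_{k_j}}$ with the very same structure constants $\lambda_{u,h,u'}$, I conclude that the $A$-linear bijection sending the class of $C_u$ in $S_{\mathfrak C_{k_j}}$ to the class of $C_{d_{k_j}u}$ in $N_j/N_{j-1}$ ($u\in\mathfrak C_{k_j}$) is an $\mathcal H'$-module isomorphism $S_{\mathfrak C_{k_j}}\cong N_j/N_{j-1}$. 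Writing $\mathfrak C_i$ for $\mathfrak C_{k_i}$, this gives $N_j/N_{j-1}\cong S_{\mathfrak C_j}$, as required, and setting $N_i=N_i$ completes the filtration.

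The step demanding genuine care is the $\mathcal H'$-stability of the $N_j$. Everything hinges on reading off from Corollary~\ref{cor:4.3a} that the off-diagonal contributions are indexed by $x\in\mathfrak X^*$ with $x<d_{k_i}$, and then combining two inputs to confine them to strictly earlier blocks: the Bruhat monotonicity $d_{k_i}\LEQ d_{k_j}$ for $i<j$ from Result~\ref{prop:2.2c}, and the vanishing $\bar C_w=0$ for $w<_R\mathfrak C$ in the cell module. The remaining matching of the diagonal blocks with the cell modules $S_{\mathfrak C_{k_j}}$ is then purely formal, since equations~\eqref{eqn:8a} and~\eqref{eqn:9a} share identical coefficients.
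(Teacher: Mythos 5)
Your construction is essentially the paper's own proof transplanted from the ambient ideals to the quotient: your $N_j$ is exactly the paper's $L_j/L_0$, and your treatment of the second (off-diagonal) sum and of the factor modules matches the paper's. There is, however, one step that is false as stated. In disposing of the first sum you claim that for $u'<_R'\mathfrak C_{k_i}$ one has $u'\notin\mathfrak C_{k_i}$ and \emph{therefore} $d_{k_i}u'\notin\mathfrak C$, so that the image of $C_{d_{k_i}u'}$ in $S_{\mathfrak C}$ vanishes. This implication tacitly assumes $d_{k_i}W'\cap\mathfrak C=d_{k_i}\mathfrak C_{k_i}$, which fails because the representatives $d_{k_1},\dots,d_{k_p}$ produced by Result~\ref{prop:2.2c} need not be distinct. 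Concretely, for $n=2$ let $\mathfrak C=\{s_2,\ s_2s_1\}$ be the right cell of $S_3$ whose recording tableau has first row $1,2$ and second row $3$: reverse insertion from either inner corner removes the entry $2$, so $d_{k_1}=d_{k_2}=x_2^{-1}=s_2$ with $\mathfrak C_{k_1}=\{s_1\}$ and $\mathfrak C_{k_2}=\{e\}$. Taking $i=2$, $u=e$, $u'=s_1<_R'e$ gives $d_{k_2}u'=s_2s_1\in\mathfrak C$, and indeed $C_{s_2s_1}$ occurs with coefficient $\pm1$ in $C_{s_2}C_{s_1}$, so the term you declare to be zero is a nonzero element of block $1$.

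The gap is local and repairable, and the repair is exactly the case analysis the paper performs: for any off-diagonal term $C_w$ (from either sum) one has $w\LEQ_R\mathfrak C$, so either $w<_R\mathfrak C$ and the term dies in $S_{\mathfrak C}$, or $w\in\mathfrak C$, in which case unique factorisation with respect to $\mathfrak X^*$ puts $w$ in some block $d_{k_l}\mathfrak C_{k_l}$; for first-sum terms one then gets $\mathfrak C_{k_l}<_R'\mathfrak C_{k_i}$, hence $l<i$ by the ordering you already invoke, while for second-sum terms $d_{k_l}<d_{k_i}$ forces $l<i$ by the Bruhat monotonicity. Since these stray terms land in strictly earlier blocks rather than vanishing, both the $\mathcal H'$-stability of $N_j$ and your congruence modulo $N_{j-1}$ survive, and the rest of your argument goes through unchanged.
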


\begin{proof}
From Result~\ref{prop:2.2c}, we see that
$\mathfrak{C}=\bigcup_{k\in\ic(D)}d_k\mathfrak{C}_k$
where $\ic(D)$ is the set of inner corners of $D$,
$\mathfrak{C}_k$ is a right cell of $W'$ and $d_k\in\mathfrak{X}^{*}$
for $k\in\ic(D)$.
Moreover, writing  $\ic(D)=\{k_1,\ldots,k_p\}$ where $k_i<k_j$ if
$i<j$, we get
$\shape{\mathfrak{C}_{k_i}}\lhd\shape{\mathfrak{C}_{k_j}}$
and $d_{k_i}\LEQ d_{k_j}$ if $i<j$.
From \cite[Theorem~5.1]{Gec05}, we see that
$\mathfrak{C}_{k_i}<_{LR}'\mathfrak{C}_{k_j}$ if, and only if, $i<j$.
It follows from Result~\ref{res:2c} that if $\mathfrak{C}_{k_i}<_{R}'\mathfrak{C}_{k_j}$
then $i<j$.

We write $\mathfrak{D}=\{z\in W\colon z<_R\mathfrak{C}\}$.
Let $L_0=\langle C_z: z\in\mathfrak D\rangle_A$
 and for $1\le j\le p$ let $L_j$ be the $A$-submodule of $\mathcal H$
spanned by $\{C_w\colon w\in\mathfrak{D}\cup \bigcup_{1\le i\le j}d_{k_i}\mathfrak{C}_{k_i}\}$.
Indeed, since this spanning set is a subset of an $A$-basis of $\mathcal H$, it is an $A$-basis of $L_j$.
It is clear from this definition that $L_0\subseteq L_1\subseteq\ldots\subseteq L_p$.

For the rest of the proof, for the sake of simplicity, we will be writing $\mathfrak C_j$ in place of $\mathfrak C_{k_j}$ and $d_j$ in place of $d_{k_j}$ for $1\le j\le p$.

We will show that $L_j$ is an $\mathcal H'$-module, for $0\le j\le p$, and that
the cell module $S_{\mathfrak{C}_j}$ is $\mathcal H'$-isomorphic to $L_j/L_{j-1}$, for $1\le j\le p$.

First observe that $L_0=\hat M_{\mathfrak{C}}$ and $L_p=M_{\mathfrak{C}}$ as subsets of $\mathcal H$.
Hence, $L_0$ and $L_p$ are $\mathcal H'$-modules via the natural action of $\mathcal H'$ by right multiplication.
In fact, as $\mathcal H'$-modules $L_0=\hat M_{\mathfrak{C}}\downarrow\mathcal H'$, $L_p= M_{\mathfrak{C}}\downarrow\mathcal H'$
and $L_p/L_0$ is isomorphic to the restricted module
$(M_{\mathfrak{C}}/\hat M_{\mathfrak{C}})\downarrow\mathcal H'=S_{\mathfrak{C}}\downarrow\mathcal H'$.

Now fix $i$ with $1\le i\le p$ and let $v\in \mathfrak C_i$ and $h\in\mathcal H'$.
In view of Corollary~\ref{cor:4.3a} we get
\begin{equation}\label{eq:(6)}
C_{d_iv}h=\sum_{u\sim_R'v}\lambda_{v,h,u}C_{d_iu}+\sum_{u<_R'v}\lambda_{v,h,u}C_{d_iu}+\sum_{x<d_i,\ u\LEQ_{LR}'v,\ xu\LEQ_Rd_iv}\mu_{d_iv,h,xu}C_{xu}
\end{equation}
where $\lambda_{v,h,u},\mu_{d_iv,h,xu}\in A$ and $C_vh=\sum_{u\LEQ_R'v}\lambda_{v,h,u}C_u$.
Denote the first, second and third sum on the right-hand-side of~\eqref{eq:(6)} by $\Sigma_1$, $\Sigma_2$ and $\Sigma_3$ respectively
and let $w\in W$ be such that $C_w$ appears with non-zero coefficient in any one of these three sums.
It follows from this that $w\LEQ_Rd_iv$ and hence $w\LEQ_R\mathfrak C$. 
Suppose first that $C_w$ appears with non-zero coefficient in $\Sigma_1$.
This means that $w=d_iu$ for some $u\in\mathfrak C_i$, so $C_w\in L_i$.
Next, suppose that $C_w$ appears with non-zero coefficient in  either $\Sigma_2$ or $\Sigma_3$.
We consider the cases $w<_R\mathfrak C$ and $w\in\mathfrak C$ separately.
If  $w<_R\mathfrak C$, then $w\in\mathfrak D$, so $C_w\in L_0\subseteq L_i$.
On the other hand, if $w\in\mathfrak C$, we see from~\eqref{eq:(6)} that $w\in\cup_{1\le l<i}d_{l}\mathfrak C_{l}$, so again $C_w\in L_i$.
This shows that for $i\in\{1,\ldots,p\}$, $z\in d_{i}\mathfrak C_{i}$ implies $C_zh\in L_i$.
Invoking the facts that $L_0$ is an $\mathcal H'$-module and that $L_i\subseteq L_j$ whenever $i\le j$, we conclude
that $L_j$ is an $\mathcal H'$-module for $0\le j\le p$.

Finally, fix $j$ with $1\le j\le p$.
To establish an $\mathcal H'$-isomorphism between  $S_{\mathfrak{C}_j}$ and $L_j/L_{j-1}$,
define $\theta\colon M_{\mathfrak{C_j}}\to L_j/L_{j-1}$ by
$\left(\sum_{u\LEQ_{R}'\mathfrak{C}_j}\xi_uC_u\right)\theta=
\Big(\sum_{u\in\mathfrak{C}_j}\xi_uC_{d_ju}\Big)+L_{j-1}$
where $\xi_u\in A$.
In view of~\eqref{eq:(6)}, also comparing with equations~\eqref{eqn:8a} and~\eqref{eqn:9a} in Corollary~\ref{cor:4.3a}, it is easy to check that $\theta$ is an $\mathcal{H}'$-module homomorphism with $\ker\theta=\hat M_{\mathfrak{C}_j}$.
(In fact, choosing the $A$-bases
$\{C_{u}+\hat M_{\mathfrak{C}_j}\colon u\in\mathfrak{C}_j\}$ and
$\{C_{d_ju}+L_{j-1}\colon u\in\mathfrak{C}_j\}$ for
$M_{\mathfrak{C}_j}/\hat M_{\mathfrak{C}_j}$ and $L_j/L_{j-1}$,
respectively, we see that these two $\mathcal{H}'$-modules affords
the same matrix representation.)

To complete the proof, we let $N_i=L_i/L_0$ for $0\le i\le p$.
\end{proof}

\subsection{Specht modules: induction and restriction}\label{Subsec:SpechtModulesIndRestr}

In this subsection we make use of the results in Subsection~\ref{Subsec:KLCellsModulesIndRestr} in order to obtain Specht filtrations (over the ring $A$) for the induced and the restricted Specht modules via $C$-bases for these modules.

\begin{theorem}\label{thm:SpFilForS2H}
Let $\lambda,\mu\vDash n$ with $\lambda''=\mu'$, let $\mathfrak C$ be the right cell of $W'$ containing $w_{J(\mu)}$ and let $D$ be the underlying diagram of the recording tableau of the elements of $\mathfrak C$.
Also let $E$ be the unique diagram in $\mathcal D^{(\lambda,\mu)}$.
Write ${\rm oc}(D)=\{k_1,\ldots,k_p\}$, where $k_i<k_j$ if $i>j$, and for $j=1,\ldots,p$ let $\mathfrak C_j=\mathfrak C_{k_j}$ where $\mathfrak C_{k_j}$ is the cell of $W$ corresponding to node $k_j$ as in the statement of Result~\ref{prop:2.1c}
(so that $\mathfrak C\mathfrak X'=\cup_{1\le j\le p}\mathfrak C_j$).
Finally let $\overline{L}_i=\langle x_{\lambda}T_{w_E}C_w:\ w\in\cup_{1\le j\le i}\mathfrak C_j\rangle_A$ for $i=1,\ldots,p$.
Then, $\{0\}=\overline L_0\subseteq\overline L_1\subseteq\ldots\subseteq\overline L_p=S^\lambda\mathcal H$ is a Specht filtration for $S^\lambda \mathcal H$ with $\overline L_i/\overline L_{i-1}\cong S^{\nu^{(i)}}$, where $\nu^{(i)}=\shape\mathfrak C_i$ for $1\le i\le p$.
(In particular, $\nu^{(i)}\lhd\nu^{(j)}$ if $i<j$.)
\end{theorem}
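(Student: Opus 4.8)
The plan is to build the Specht filtration for $S^\lambda\mathcal H$ by transporting the cell-module filtration of Proposition~\ref{prop:4.1c} across the isomorphism supplied by Result~\ref{Result:JPAATh3.5}. The key observation is that $S^\lambda\cong S_{\mathfrak C}$ (this is exactly what Result~\ref{Result:JPAATh3.5}(i) and Remark~\ref{RemarkSpecht} give, since $\shape\mathfrak C=\mu'=\lambda''$), and the map realizing this isomorphism is $\theta\colon M_{w_{J(\mu)}}\to S^\lambda$, $m\mapsto x_\lambda T_{w_E}m$. First I would extend $\theta$ to the induced module. Since $\mathcal H$ is a flat (indeed free) left $\mathcal H'$-module, tensoring the surjection $M_{\mathfrak C}\twoheadrightarrow S^\lambda$ with $\mathcal H$ over $\mathcal H'$ yields a surjective $\mathcal H$-homomorphism $\Theta\colon M_{\mathfrak C}\mathcal H\to S^\lambda\mathcal H$ sending $C_y h\mapsto x_\lambda T_{w_E}C_y h$ for $y\LEQ_R'\mathfrak C$, $h\in\mathcal H$. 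The point is that $\Theta$ carries the $A$-module $\overline L_i$ onto images of the cell-filtration pieces $L_i$ of Proposition~\ref{prop:4.1c}, because by construction $\overline L_i=\langle x_\lambda T_{w_E}C_w:\ w\in\bigcup_{1\le j\le i}\mathfrak C_j\rangle_A$ is precisely $\Theta(L_i/\hat M_{\mathfrak C}\mathcal H)$ once one accounts for the kernel.

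Next I would identify the kernel of $\Theta$ with $\hat M_{\mathfrak C}\mathcal H=L_0$ from Proposition~\ref{prop:4.1c}. This follows from flatness: $\ker\Theta=(\ker\theta)\mathcal H=\hat M_{w_{J(\mu)}}\mathcal H=\hat M_{\mathfrak C}\mathcal H$, using Result~\ref{Result:JPAATh3.5}(i) that $\ker\theta=\hat M_{w_{J(\mu)}}=\hat M_{\mathfrak C}$. Consequently $\Theta$ descends to an $\mathcal H$-module \emph{isomorphism} $\overline\Theta\colon M_{\mathfrak C}\mathcal H/\hat M_{\mathfrak C}\mathcal H\xrightarrow{\sim}S^\lambda\mathcal H$, and the left-hand side is exactly $S_{\mathfrak C}\uparrow\mathcal H$ carrying the filtration $\{N_i\}$ of Proposition~\ref{prop:4.1c}. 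I would then set $\overline L_i:=\overline\Theta(N_i)$ and verify this matches the spanning-set definition given in the statement; the factors satisfy $\overline L_i/\overline L_{i-1}\cong N_i/N_{i-1}\cong S_{\mathfrak C_i}$. Finally, applying Remark~\ref{RemarkSpecht} to each $\mathfrak C_i$ gives $S_{\mathfrak C_i}\cong S^{\nu^{(i)}}$ where $\nu^{(i)}=\shape\mathfrak C_i$, making the filtration a \emph{Specht} filtration; the dominance statement $\nu^{(i)}\lhd\nu^{(j)}$ for $i<j$ is immediate from the ordering clause of Result~\ref{prop:2.1c} together with the indexing convention $k_i<k_j$ for $i>j$ adopted in the theorem.

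\textbf{The main obstacle} will be the bookkeeping around the two competing orderings of the outer corners. Result~\ref{prop:2.1c} orders so that $k<k'$ gives $\shape\mathfrak C_{k'}\lhd\shape\mathfrak C_k$, whereas Proposition~\ref{prop:4.1c} relabels with $k_j<k_i$ when $i<j$ to arrange the filtration with factors of \emph{increasing} shape. The present theorem instead declares $k_i<k_j$ when $i>j$, so I must check carefully that the spanning set $\overline L_i=\langle x_\lambda T_{w_E}C_w:\ w\in\bigcup_{j\le i}\mathfrak C_j\rangle_A$ really corresponds to the submodule $N_i$ and that the factors come out as $\nu^{(i)}\lhd\nu^{(j)}$ for $i<j$ (i.e.\ increasing, matching the abstract Proposition). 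A secondary subtlety is that the $A$-module $\overline L_i$ is \emph{a priori} only defined by a spanning set, so I should confirm it is genuinely $\Theta(L_i)$ and hence an $\mathcal H$-submodule — this uses that the images $\{x_\lambda T_{w_E}C_w:\ w\sim_R w_{J(\mu)}\}$ form an $A$-basis of $S^\lambda$ (Result~\ref{Result:JPAATh3.5}(ii)) and that $\Theta$ maps the basis $\{C_w:\ w\in\mathfrak C_j\}$ of each factor to the claimed generators, so no collapsing occurs beyond the kernel already accounted for.
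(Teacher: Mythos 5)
Your proposal is correct, and its skeleton coincides with the paper's: both lift the map $m\mapsto x_\lambda T_{w_E}m$ of Result~\ref{Result:JPAATh3.5} to $\Theta\colon M_{\mathfrak C}\mathcal H\to S^\lambda\mathcal H$, both import the cell filtration of Proposition~\ref{prop:4.1c}, and both finish with Remark~\ref{RemarkSpecht} and the ordering clause of Result~\ref{prop:2.1c} (your worry about the two indexing conventions is unfounded --- ``$k_j<k_i$ if $i<j$'' and ``$k_i<k_j$ if $i>j$'' are the same condition, so the dominance conclusion comes out right). Where you genuinely diverge is in the technical engine. The paper only proves the containment $\hat M_{\mathfrak C}\mathcal H\subseteq\ker\Theta$ inside the theorem, then establishes that $\mathcal B$ is an $A$-basis by a dimension count over $F$ (using $\dim_F S^\lambda_F\mathcal H_F=(n+1)|\mathfrak C|$), verifies that each $\overline L_i$ is an $\mathcal H$-submodule by the explicit $\Sigma_1+\Sigma_2+\Sigma_3$ decomposition of $C_wh$, and only in Remark 3.7(i) deduces $\ker\Theta=\hat M_{\mathfrak C}\mathcal H$ a posteriori. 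You instead tensor the short exact sequence $0\to\hat M_{\mathfrak C}\to M_{\mathfrak C}\to S^\lambda\to 0$ of Result~\ref{Result:JPAATh3.5}(i) with the flat (free) left $\mathcal H'$-module $\mathcal H$, getting $\ker\Theta=\hat M_{\mathfrak C}\mathcal H$ up front; the basis property of $\mathcal B$, the submodule property of $\overline L_i=\Theta(L_i)$, and the factor isomorphisms $\overline L_i/\overline L_{i-1}\cong N_i/N_{i-1}\cong S_{\mathfrak C_i}$ then all fall out of $\overline\Theta$ being an isomorphism. This buys a shorter, more structural argument at the cost of one point you should make explicit rather than wave at as ``flatness'': you need the multiplication maps $N\otimes_{\mathcal H'}\mathcal H\to N\mathcal H$ to be isomorphisms for the right ideals $N=\hat M_{\mathfrak C}$, $M_{\mathfrak C}$ and $S^\lambda$ of $\mathcal H'$, with the evident squares commuting, so that exactness of the tensored sequence really transfers to $0\to\hat M_{\mathfrak C}\mathcal H\to M_{\mathfrak C}\mathcal H\to S^\lambda\mathcal H\to 0$; this is precisely the observation recorded at the start of Subsection~\ref{Subsec:KLCellsModulesIndRestr}, so it is available, but it is the load-bearing step and deserves a citation. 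The paper's longer route has the compensating virtue of exhibiting the $F$-dimension of $S^\lambda\mathcal H$ and the explicit structure constants, which it reuses in Theorem~\ref{thm:SpFilForSlambdaRestToH}.
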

\begin{proof}
We assume the hypothesis.
We know from Result~\ref{prop:2.1c} that $\cup_{1\le j\le p}\mathfrak C_j=\{ux:$ $u\in\mathfrak C$ and $x\in\mathfrak X'\}$.
First we show that $S^\lambda \mathcal H$ is free as an $A$-module having the set $\mathcal B=\{x_{\lambda}T_{w_E}C_w: w\in\cup_{1\le j\le p} \mathfrak{C_j}\}=\{x_{\lambda}T_{w_E}C_w: w\in\mathfrak C\mathfrak X'\}$ as an $A$-basis.
For this, set $\theta:M_{\mathfrak C}\mathcal H\to S^{\lambda}\mathcal H:$ $m\mapsto x_{\lambda}T_{w_E}m$ ($m\in M_{\mathfrak C}\mathcal H$).
Since $M_{\mathfrak C}\mathcal H=C_{w_{J(\mu)}}\mathcal H$ and $S^\lambda\mathcal H=x_\lambda T_{w_E}C_{w_{J(\mu)}}\mathcal H$
we can see that $\theta$ is a surjective $\mathcal H$-module homomorphism.
Invoking Result~\ref{Result:JPAATh3.5} we get that $\hat M_{\mathfrak C}\subseteq\ker\theta$.
Since $\hat M_{\mathfrak C}\mathcal H=\langle C_yT_v:\ y<_R'\mathfrak C,\ v\in\mathfrak X' \rangle_A$, we also get that $\hat M_{\mathfrak C} \mathcal H\subseteq\ker\theta$.
It follows that $S^\lambda \mathcal H$ is spanned over $A$ by the set $\mathcal B$.
To show that $\mathfrak B$ is $A$-linearly independent, we temporarily extend scalars from $A$ to $F$ and consider the (surjective) $\mathcal{H}_F$-module homomorphism $\theta_F:C_{w_{J(\mu)}}\mathcal{H}_F\to x_{\lambda}T_{w_E}C_{w_{J(\mu)}}\mathcal{H}_F\,(=S_F^\lambda\mathcal H_F=S^\lambda \mathcal H_F)$ given by premultiplication by $x_\lambda T_{w_E}$.
The above discussion ensures that $S_F^\lambda\mathcal H_F$ is spanned, over $F$, by the set $\mathcal B$.
Now $|\mathfrak X'|=n+1$, so $|\mathcal B|=(n+1)|\mathfrak C|$.
But $(n+1)|\mathfrak C|=\dim_F(S_F^\lambda\mathcal H_F)$ since $|\mathfrak C|=\dim_FS_F^\lambda=\dim_FS_F^\lambda\mathcal H_F'$ (see~\cite[Theorem~4.6]{MPa15}) and $S_F^\lambda\mathcal H_F\cong S_F^\lambda\uparrow\mathcal H_F$.
It follows that the set $\mathcal B$ is $F$-linearly independent and hence $A$-linearly independent.

We conclude that $S^\lambda\mathcal H$ is a free $A$-module having the set $\mathcal B$ as an $A$-basis.

It is also immediate from the above that $\overline{L}_p=S^\lambda\mathcal H$ and that, for $1\le i\le p$, the set $\mathcal B_i=\{x_{\lambda}T_{w_E}C_w: w\in\cup_{1\le j\le i} \mathfrak{C_j}\}$ is an $A$-basis for $\overline{L}_i$.
At this point it would be useful to recall (see the proof of Proposition~\ref{prop:4.1c}) that $i<j$ whenever $\mathfrak{C_i}<_R\mathfrak{C_j}$ for $1\le i,j\le p$.
Now fix $i$ with $1\le i\le p$.
Also let $h\in\mathcal H$ and let $w\in\cup_{1\le j\le i} \mathfrak{C_j}$.
Note that $C_w\in M_{\mathfrak{C}}\mathcal H\,(=\langle C_{yv}:\ y\LEQ_R'\mathfrak{C}, v\in\mathfrak X' \rangle_A)$.
Since $M_{\mathfrak{C}}\mathcal H$ is an $\mathcal H$-module, $C_wh\in M_{\mathfrak{C}}\mathcal H$.
It follows that $C_wh$ can be expressed as a sum $\Sigma_1+\Sigma_2+\Sigma_3$ where $\Sigma_1$ is an $A$-linear combination of terms $C_z$ with $z\in\mathfrak C_i$, $\Sigma_2$ is an $A$-linear combination of terms $C_z$ with $z\in\cup_{1\le j\le i-1}\mathfrak C_j$ (in particular, $\Sigma_2$ is an empty sum if $i=1$) and $\Sigma_3\in\hat M_{\mathfrak{C}}\mathcal H$.
As we have already observed $\hat M_{\mathfrak{C}}\mathcal H\subseteq\ker\theta$ so $x_{\lambda}T_{w_E}\Sigma_3=0$.
It follows that $\overline{L}_i$ is indeed an $\mathcal H$-submodule of $S^\lambda\mathcal H$ and clearly $\overline{L}_{i-1}\subseteq\overline{L}_i$.
Moreover, $\overline{L}_i/\overline{L}_{i-1}\cong S_{\mathfrak{C}_i}$ as $\mathcal H$-modules since the natural map $\sum_{u\in\mathfrak{C}_i}\xi_uC_u+\hat M_{\mathfrak{C}_i}\mapsto x_{\lambda}T_{w_E}(\sum_{u\in\mathfrak{C}_i}\xi_uC_u)+\overline{L}_{i-1}$ from $S_{\mathfrak{C}_i}$ to $\overline{L}_i/\overline{L}_{i-1}$ is clearly an $\mathcal H$-module isomorphism (recall that $\hat M_{\mathfrak{C}_i}\subseteq L_{i-1}$ with $L_{i-1}$ as defined in the proof of Proposition~\ref{prop:4.1c}).
Finally, our assumption that $\shape\mathfrak{C}_i=\nu^{(i)}$ ensures, in view of Remark~\ref{RemarkSpecht}, that $\overline{L}_i/\overline{L}_{i-1}\, (\cong S_{\mathfrak{C}_i})\cong S^{\nu^{(i)}}$.
By construction, $\shape\mathfrak{C}_i\lhd\shape\mathfrak{C}_j$ if $i<j$.
\end{proof}

\begin{remark}
(i)
Under the hypothesis of Theorem~\ref{thm:SpFilForS2H} and keeping the notation we have introduced therein, we can say a bit more about the surjective $\mathcal H$-module homomorphism from $M_{\mathfrak{C}}\mathcal H$ to $S^\lambda\mathcal H$ which maps $m\in M_{\mathfrak{C}}\mathcal H$ to $x_{\lambda}T_{w_E}m$.
We have already shown that $\hat M_{\mathfrak{C}}\mathcal H\subseteq\ker\theta$.
In fact, $\hat M_{\mathfrak{C}}\mathcal H=\ker\theta$.
To show that the reverse inclusion $\ker\theta\subseteq\hat M_{\mathfrak{C}}\mathcal H$ also holds, we argue as in the proof of~\cite[Theorem~3.5]{MPa05}.
So let $r\in\ker\theta$.
Then $r=m+\sum_{y\in\mathfrak{C},v\in\mathfrak X'}\alpha_{y,v}C_{yv}$ for some $m\in\hat M_{\mathfrak{C}}\mathcal H$ and $\alpha_{y,v}\in A$.
Hence, $0=r\theta=m\theta+\sum_{y\in\mathfrak{C},v\in\mathfrak X'}\alpha_{y,v}(C_{yv}\theta)$.
Since $m\in\ker\theta$ we get $\sum_{y\in\mathfrak{C},v\in\mathfrak X'}\alpha_{y,v}(C_{yv}\theta)=0$.
But we have already shown in the proof of Theorem~\ref{thm:SpFilForS2H} that the set $\{C_{yv}\theta:\ y\in\mathfrak{C},v\in\mathfrak X'\}\, (=\{C_w\theta:$ $w\in\cup_{1\le j\le p}\mathfrak{C}_j\})$ is $A$-linearly independent.
Hence $\alpha_{y,v}=0$ for all $y\in\mathfrak{C}$ and $v\in\mathfrak X'$.
This ensures that $r=m\in\hat M_{\mathfrak{C}}\mathcal H$ as required.
We conclude that the map $\theta$ induces a natural $\mathcal H$-isomorphism from the induced cell module $S_{\mathfrak{C}}\uparrow\mathcal H$ which is isomorphic to $M_{\mathfrak{C}}\mathcal H/\hat M_{\mathfrak{C}}\mathcal H$, to the induced Specht module $S^\lambda\uparrow\mathcal H$, which is isomorphic to $S^\lambda\mathcal H$.

(ii)
For a fixed composition $\lambda$ of $n$, the construction in the theorem above gives for each composition $\mu$ of $n$ with $\lambda''=\mu'$ a different $A$-basis for $S^\lambda\mathcal H$ thus giving rise to a different Specht filtration for this module.
\end{remark}

For the next result we keep the notation in Result~\ref{prop:2.2c} and Proposition~\ref{prop:4.3c}.

\begin{theorem}\label{thm:SpFilForSlambdaRestToH}
Let $\lambda,\mu\vDash n+1$ with $\lambda''=\mu'$, let $\mathfrak C$ be the right cell of $W$ containing $w_{J(\mu)}$ and let $D$ be the underlying diagram of the recording tableau of the elements of $\mathfrak C$ (so $D$ is a $\lambda''$ diagram).
Also let $E$ be the unique diagram in $\mathcal D^{(\lambda,\mu)}$.
Write ${\rm ic}(D)=\{k_1,\ldots,k_p\}$, where $k_i<k_j$ if $i<j$.
Also write $\mathfrak C_i$ and $d_i$ in place of $\mathfrak C_{k_i}$ and $d_{k_i}$ respectively (so $\mathfrak C=\cup_{1\le i\le p}d_i \mathfrak C_i$).
Finally, for $1\le i\le p$, let $\overline{L}_i=\langle x_{\lambda}T_{w_E}C_w:\ w\in\cup_{1\le j\le i} d_j\mathfrak C_j\rangle_A$.
Then, $\{0\}=\overline L_0\subseteq\overline L_1\subseteq\ldots\subseteq\overline L_p=S^\lambda\downarrow\mathcal H'$ is a Specht filtration (of $\mathcal H'$-submodules of $S^\lambda$) for the $\mathcal H$-Specht module $S^\lambda$ restricted to $\mathcal H'$ with $\overline L_i/\overline L_{i-1}\cong S^{\nu^{(i)}}$, where $\nu^{(i)}=\shape\mathfrak C_i$ for $1\le i\le p$.
(In particular, $\nu^{(i)}\lhd\nu^{(j)}$ if $i<j$.)
\end{theorem}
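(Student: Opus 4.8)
The plan is to obtain this restriction statement by transporting the $\mathcal H'$-module filtration of $M_{\mathfrak C}$ produced in Proposition~\ref{prop:4.3c} through the surjection of Result~\ref{Result:JPAATh3.5}, paralleling the treatment of the induced case in Theorem~\ref{thm:SpFilForS2H}. The first thing to record is that, since $\mathfrak C$ is the right cell of $W$ containing $w_{J(\mu)}$, Remark~\ref{RemarkSpecht} forces $\shape\mathfrak C=\mu'=\lambda''$; hence the recording-tableau diagram $D$ is exactly the Young diagram of shape $\lambda''$, and Proposition~\ref{prop:4.3c} applies verbatim to this $\mathfrak C$. It supplies an ascending chain of $\mathcal H'$-submodules $L_0\subseteq L_1\subseteq\cdots\subseteq L_p$ of $M_{\mathfrak C}\downarrow\mathcal H'$ with $L_0=\hat M_{\mathfrak C}$, $L_p=M_{\mathfrak C}$, with $L_j$ spanned by $\{C_w:w\in\mathfrak D\cup\bigcup_{1\le i\le j}d_i\mathfrak C_i\}$ (writing $\mathfrak D=\{z\in W:z<_R\mathfrak C\}$), and with $L_j/L_{j-1}\cong S_{\mathfrak C_j}$ for $1\le j\le p$.

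Next I would bring in Result~\ref{Result:JPAATh3.5}, which gives a surjective $\mathcal H$-homomorphism $\theta\colon M_{w_{J(\mu)}}\to S^\lambda$, $m\mapsto x_\lambda T_{w_E}m$, with $\ker\theta=\hat M_{w_{J(\mu)}}$. Since $\mathfrak C$ is the right cell of $w_{J(\mu)}$ we have $M_{w_{J(\mu)}}=M_{\mathfrak C}$ and $\ker\theta=\hat M_{w_{J(\mu)}}=\hat M_{\mathfrak C}=L_0$. Restricting scalars, $\theta$ is also a surjective $\mathcal H'$-homomorphism, so setting $\overline L_j:=\theta(L_j)$ produces $\mathcal H'$-submodules of $S^\lambda\downarrow\mathcal H'$ with $\overline L_0=\theta(L_0)=0$ and $\overline L_p=\theta(M_{\mathfrak C})=S^\lambda\downarrow\mathcal H'$. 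Here I would carry out the one concrete computation of the proof: because every $C_w$ with $w\in\mathfrak D$ lies in $L_0=\ker\theta$ and so maps to $0$, the spanning set of $L_j$ maps onto $\{x_\lambda T_{w_E}C_w:w\in\bigcup_{1\le i\le j}d_i\mathfrak C_i\}$, showing that $\theta(L_j)$ coincides with the submodule $\overline L_j$ named in the statement.

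For the filtration factors I would exploit that $L_0=\ker\theta$ sits below $L_{j-1}\subseteq L_j$. Restricting $\theta$ to $L_j$ gives a surjection onto $\overline L_j$ with kernel $L_j\cap L_0=L_0$, whence $\overline L_j\cong L_j/L_0$ as $\mathcal H'$-modules; the third isomorphism theorem then yields $\overline L_j/\overline L_{j-1}\cong(L_j/L_0)/(L_{j-1}/L_0)\cong L_j/L_{j-1}\cong S_{\mathfrak C_j}$. Applying Remark~\ref{RemarkSpecht} to the right cell $\mathfrak C_j$ of $W'$ rewrites this as $\overline L_j/\overline L_{j-1}\cong S^{\nu^{(j)}}$ with $\nu^{(j)}=\shape\mathfrak C_j$, so the chain is a genuine Specht filtration; and the dominance $\nu^{(i)}\lhd\nu^{(j)}$ for $i<j$ is read off directly from Result~\ref{prop:2.2c}, since $k_i<k_j$ gives $\shape\mathfrak C_{k_i}\lhd\shape\mathfrak C_{k_j}$.

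I do not expect any step to present a real obstacle: the genuinely technical input is already packaged in Proposition~\ref{prop:4.3c} (which in turn rests on Corollary~\ref{cor:4.3a}), and the remaining work is the bookkeeping of pushing that filtration forward along $\theta$. The only point needing attention is the identification $\theta(L_j)=\overline L_j$ together with the vanishing of the $\mathfrak D$-indexed terms, both of which are immediate consequences of $\ker\theta=L_0$.
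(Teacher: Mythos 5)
Your proposal is correct and follows essentially the same route as the paper: both arguments combine the $\mathcal H'$-module filtration $L_0\subseteq\cdots\subseteq L_p$ of $M_{\mathfrak C}$ from Proposition~\ref{prop:4.3c} with the map $m\mapsto x_\lambda T_{w_E}m$ and its kernel $\hat M_{w_{J(\mu)}}=\hat M_{\mathfrak C}=L_0$ from Result~\ref{Result:JPAATh3.5}. The only difference is packaging: you transport the filtration along $\theta$ via the isomorphism theorems, whereas the paper first exhibits the sets $\mathcal B_i$ as $A$-bases of the $\overline L_i$ and then re-runs the computation of equation~\eqref{eq:(6)} on the images to verify the $\mathcal H'$-module structure and the quotients.
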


\begin{proof}
Assume the hypothesis.
For $1\le i\le p$ set $\mathcal B_i=\{x_{\lambda}T_{w_E}C_w: w\in\cup_{1\le j\le i} d_j\mathfrak{C_j}\}$.
From Result~\ref{Result:JPAATh3.5}, $\mathcal B_p$ is an $A$-basis for $S^\lambda$.
It follows that $\mathcal B_i$ is an $A$-basis for $\overline L_i$ for $1\le i\le p$.
Clearly $S^\lambda=\overline L_p$ as sets.
Arguing as in the proof of Proposition~\ref{prop:4.3c}, see in particular equation~\eqref{eq:(6)} and the discussion following it, and invoking Result~\ref{Result:JPAATh3.5} again, we can deduce that, for $1\le i\le p$, $\overline L_i$ is an $\mathcal H'$-submodule of $S^\lambda$ with $\overline L_i/\overline L_{i-1}\cong S_{\mathfrak C_i}\cong S^{\nu^{(i)}}$ (recall $\nu^{(i)}=\shape\mathfrak C_i$).
Also observe that $\overline L_p$, as an $\mathcal H'$-module is in fact $S^\lambda\downarrow\mathcal H'$.
The relation in the final sentence of the statement of the theorem is immediate from Result~\ref{prop:2.2c}.
\end{proof}

\section{A link with sequences and pairs of partitions}\label{sec:LinkSequenPairsOfPart}

In this section we provide a link between a certain $C$-basis for the induced Specht module considered earlier on in this paper with the notion of pairs of partitions introduced by James in~\cite{James1977} and which he also describes in~\cite{Jam78}.
We refer the reader to~\cite{Jam78} for the basic definitions and background concerning sequences, semistandard tableaux and pairs of partitions which will be needed in this section.
Note that in~\cite{Jam78} (see also~\cite{DJa86} for the Hecke algebra case), the notion of pair of partitions is used in order to construct Specht filtrations for certain modules, denoted by $S^{\mu^{\sharp},\mu}$ in these papers, which are generalizations of induced Specht modules associated to partitions.


We begin by describing another useful way of comparing compositions of not necessarily the same integer $m$ (compare with definition of pairs of partitions~\cite[page~54]{Jam78}):
Let $\lambda=(\lambda_i)\vDash m_1$ and $\nu=(\nu_i)\vDash m_2$.
We write $\lambda\preceq\nu$ if $\lambda_i\le\nu_i$ for all $i$ (so in such a case we have $m_1\le m_2$).

Throughout this section we fix $\mu=(\mu_1,\ldots,\mu_r)\vDash m$.

A (finite) sequence is said to have type $\mu$ if, for each $i$, with $1\le i\le r$, symbol $i$ occurs $\mu_i$ times in the sequence (see~\cite[page~54]{Jam78}).
We denote by $\mathfrak R(\mu)$ the set of sequences of type $\mu$.
Also let $L(\mu)=\{w\in S_m:\ w\LEQ_Lw_{J(\mu)}\}$.
Recall that $L(\mu)=\mathfrak X_{J(\mu)}^{-1}w_{J(\mu)}$ (this follows, for example, from~\cite[Proposition~2.4]{KLu79} in view of~\cite[Proposition~1.5.1(c), Lemma~1.5.2 and Proposition~2.1.1]{GPf00}).
Finally, for $1\le i\le r$, we denote by $d^\mu(i)$ the decreasing sequence of the $\mu_i$ successive numbers starting with $\mu_1+\ldots+\mu_{i-1}+\mu_i$ and ending with $\mu_1+\ldots+\mu_{i-1}+1$ (in particular $d^\mu(1)$ is the decreasing sequence $\mu_1,\ldots,1$).
Note that the row-form of $w_{J(\mu)}$ can be obtained by placing $d^\mu(1)$, $d^\mu(2)$, \ldots, $d^\mu(r)$ one next to the other in that order.

We now define a map $\mathfrak R(\mu)\to L(\mu)$ by $\mathfrak t\mapsto w(\mathfrak t)$ ($\mathfrak t\in\mathfrak R(\mu)$), where $w(\mathfrak t)$ is the element of $S_m$ obtained from $\mathfrak t$ by replacing, for $i=1,\ldots,r$\,, the $i$'s in $\mathfrak t$, going through the sequence from left to right, by the members of $d^\mu(i)$, in the order they appear in this decreasing subsequence.

Clearly for $\mathfrak t_1,\mathfrak t_2\in\mathfrak R(\mu)$ with $\mathfrak t_1\ne\mathfrak t_2$, we have $w(\mathfrak t_1)\ne w(\mathfrak t_2)$.
Also observe that each one of the $d^\mu(i)$ occurs as a decreasing subsequence in the row-form of $w(\mathfrak t)$ whenever $\mathfrak t\in\mathfrak R(\mu)$.
Now $L(\mu)\,(=\mathfrak X_{J(\mu)}^{-1}w_{J(\mu)})$ consists of precisely those elements of $S_m$ which have each one of the $d^\mu(i)$, for $1\le i\le r$, appearing as a decreasing subsequence in their row-form.
This establishes that the above map from $\mathfrak R(\mu)$ to $L(\mu)$ is well-defined and bijective.

As in~\cite[\S13]{Jam78}, for the discussion that follows it will be convenient to introduce tableaux $T$ having repeated entries.
Such tableaux will be denoted by capital letters.
A tableau $T$ is said to have type $\mu\,(=(\mu_1,\ldots,\mu_r)$), if for every $i$ the number $i$ occurs $\mu_i$ times in $T$.

In analogy with the definition of a semistandard tableau in~\cite[page~45]{Jam78} we say that a tableau $T$ is \emph{$c$-semistandard} if the numbers are non-decreasing down the columns of $T$ and strictly increasing along the rows of $T$.
For $\lambda\vdash m$ we denote by $\mathcal T_c(\lambda,\mu)$ the set of $c$-semistandard $\lambda$-tableaux of type $\mu$.
We also let $\mathcal T_c(\mu)=\cup_{\lambda\vdash m}\mathcal T_c(\lambda,\mu)$.

We can make the following observation.
Suppose $T\in\mathcal T_c(\mu)$ (with $\mu=(\mu_1,\ldots,\mu_r)\vDash m$).
Then symbol $i$ cannot appear in any column of $T$ which is to the right of the $i$-th column of $T$.
In particular, $T$ has at most $r$ columns.

\begin{definition}\label{def:WordOfTPT}
Let $T\in\mathcal T_c(\mu)$.

(i) We define the \emph{word} of $T$ to be the sequence $\mathfrak t_T\in\mathfrak R(\mu)$ obtained from $T$ going through the entries of $T$ as follows:
Starting from the first column, write down the entries from bottom to top, then list the entries from bottom to top in the second column, working across to the last column (compare~\cite[page~17]{Ful97}).
It is easy to see that a tableau $T\in\mathcal T_c(\mu)$ can easily be recovered from its word $\mathfrak t_T$.

(ii) We define $P_T$ to be the insertion tableau for the element $w(\mathfrak t_T)\,(\in L(\mu))$ in the Robinson--Schensted process, that is, $P_T=P(w(\mathfrak t_T))$.
\end{definition}

Given $T\in\mathcal T_c(\mu)$, it follows easily from the fact that $T$ is $c$-semistandard that  $P_T$ can be obtained from $T$ by replacing each entry of $T$ with the same number with which we need to replace the corresponding entry in $\mathfrak t_T$ in order to obtain $w(\mathfrak t_T)$.
We can say a bit more:
Again from the fact that $T$ is $c$-semistandard we know that symbol $i$, for $1\le i\le r$, appears at most once in a row of $T$.
It follows that $P_T$ is in fact obtained from $T$ by replacing symbol $i$, as we go from the bottom to the top row of $T$, by the members of $d^\mu(i)$, in the order they appear in $d^\mu(i)$.

Recall that $w(\mathfrak t_T)\in L(\mu)$ whenever $T\in\mathcal T_c(\mu)$.
Suppose now that $T_1,T_2\in\mathcal T_c(\mu)$ with $T_1\ne T_2$.
Then clearly $P_{T_1}\ne P_{T_2}$.
But $P_{T_1}=P(w(\mathfrak t_{T_1}))$ and $P_{T_2}=P(w(\mathfrak t_{T_2}))$.
It follows that $w(\mathfrak t_{T_1})$ and $w(\mathfrak t_{T_2})$ belong to different left cells inside the union of left cells $L(\mu)$.
In other words, the correspondence $T\mapsto P_T$ induces an injection from $\mathcal T_c(\mu)$ to the set of left cells contained in the union of left cells $L(\mu)$.
The following result, which is an immediate consequence of Young's rule, will ensure that the correspondence $T\mapsto P_T$ in fact induces a bijection between the above sets.

\begin{proposition}\label{prop:4.2}
Let $\mu\vDash m$ and let $\lambda\vdash m$.
Then, the number of left cells $\mathfrak C$ of $W=S_m$ satisfying $\mathfrak C\LEQ_Lw_{J(\mu)}$ and $\shape\mathfrak C=\lambda$ equals the number of $c$-semistandard $\lambda$-tableaux of type $\mu$.
\end{proposition}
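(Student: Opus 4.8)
The plan is to convert the count of left cells into a multiplicity count in a single induced module, for which Young's rule gives the answer directly. First I would pass from left cells to right cells via the involution $w\mapsto w^{-1}$. By Result~\ref{res:5c} this map sends the left cell $\{w:\mathcal P(w)=\mathcal P\}$ onto the right cell $\{w:\mathcal Q(w)=\mathcal P\}$, so it is a shape-preserving bijection from left cells to right cells. Since $w_{J(\mu)}$ is an involution, inversion carries $L(\mu)=\mathfrak X_{J(\mu)}^{-1}w_{J(\mu)}$ onto $L(\mu)^{-1}=(\mathfrak X_{J(\mu)}^{-1}w_{J(\mu)})^{-1}=w_{J(\mu)}\mathfrak X_{J(\mu)}=\{w:w\LEQ_R w_{J(\mu)}\}$. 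Writing $N_{\lambda\mu}$ for the number of left cells of shape $\lambda$ contained in $L(\mu)$, we therefore get that $N_{\lambda\mu}$ equals the number of right cells $\mathfrak C$ of shape $\lambda$ with $\mathfrak C\LEQ_R w_{J(\mu)}$.

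Second, I would read off this number from a cell filtration. The right $\mathcal H$-module $M_{w_{J(\mu)}}=\langle C_w:w\LEQ_R w_{J(\mu)}\rangle_A=C_{w_{J(\mu)}}\mathcal H$ carries a filtration by right cell modules, obtained by refining $\LEQ_R$ to a total order on the right cells it contains and adjoining them one at a time; by Remark~\ref{RemarkSpecht} each factor $S_{\mathfrak C}$ is isomorphic to $S^{\shape\mathfrak C}$. Extending scalars to $F$, where $\mathcal H_F$ is split semisimple and the $S^\lambda$ are the pairwise non-isomorphic irreducibles, the multiplicity of $S^\lambda$ in $M_{F,w_{J(\mu)}}$ equals the number of right cell factors of shape $\lambda$, that is, the quantity $N_{\lambda\mu}$ from the first step.

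Third, I would identify this multiplicity through Young's rule. Because $C_{w_{J(\mu)}}$ is a scalar multiple of $y_\mu$, we have $M_{w_{J(\mu)}}=y_\mu\mathcal H$, and $y_\mu\mathcal H_F$ is the Hecke-algebra analogue of the module $\mathrm{Ind}_{W_{J(\mu)}}^{W}(\mathrm{sgn})$ induced from the sign representation of the parabolic subgroup $W_{J(\mu)}$ (indeed $C_{w_{J(\mu)}}T_s=-C_{w_{J(\mu)}}$ for every $s\in J(\mu)$, so $y_\mu$ spans a one-dimensional index-sign module over $\mathcal H'$). By the sign-twisted form of Young's rule---obtained from the ordinary rule $M^\mu=\bigoplus_\nu K_{\nu\mu}S^\nu$ by tensoring with $\mathrm{sgn}$ and using $\mathrm{sgn}\otimes S^\nu\cong S^{\nu'}$---the multiplicity of $S^\lambda$ in $y_\mu\mathcal H_F$ equals the Kostka number $K_{\lambda'\mu}$, the number of semistandard $\lambda'$-tableaux of content $\mu$. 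Hence $N_{\lambda\mu}=K_{\lambda'\mu}$.

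Finally, the transposition $T\mapsto T^{t}$ is a bijection between the $c$-semistandard $\lambda$-tableaux of type $\mu$ (columns weakly increasing, rows strictly increasing) and the semistandard $\lambda'$-tableaux of type $\mu$ (rows weakly increasing, columns strictly increasing), so $|\mathcal T_c(\lambda,\mu)|=K_{\lambda'\mu}$; combining this with the previous steps yields $N_{\lambda\mu}=|\mathcal T_c(\lambda,\mu)|$, as asserted. The point requiring the most care is the conjugation: the module that governs the count is the \emph{sign}-induced module $y_\mu\mathcal H$ rather than the permutation module $x_\mu\mathcal H$, and it is exactly this that produces $K_{\lambda'\mu}$ in place of $K_{\lambda\mu}$, matching the $c$-semistandard tableaux (rather than the ordinary semistandard tableaux) appearing in the statement. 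As a consistency check, the injection $T\mapsto P_T$ constructed before the proposition already gives $|\mathcal T_c(\lambda,\mu)|\LEQ N_{\lambda\mu}$, and the equality just obtained forces that injection to be a bijection, which is precisely what is needed for the subsequent correspondence between $\mathcal T_c(\mu)$ and the left cells in $L(\mu)$.
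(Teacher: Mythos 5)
Your proof is correct and follows essentially the same route as the paper's: both convert the left cells of $L(\mu)$ into right cells below $w_{J(\mu)}$ via inversion, read the count off a cell filtration of the module generated by $C_{w_{J(\mu)}}$, apply Young's rule, and finish with the transposition bijection between $c$-semistandard $\lambda$-tableaux and semistandard $\lambda'$-tableaux of type $\mu$. The only (cosmetic) difference is where the sign twist sits: the paper applies $j_{\mathcal H}$ to $M_{w_{J(\mu)}}$ and uses the ordinary Young's rule, which is why it must first establish $S_{F,w_{J(\lambda')}}^{\bullet}\cong S_F^{\lambda'}$ via~\cite{MPa05}, whereas you keep $y_\mu\mathcal H$ untwisted, quote Remark~\ref{RemarkSpecht} for $S_{\mathfrak C}\cong S^{\shape\mathfrak C}$, and use the sign-twisted form of Young's rule instead.
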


\begin{proof}
We assume the hypothesis.
As in~\cite{MPa05}, for $x\in W=S_m$, we denote by $S_{F,x}^{\bullet}$ the ${\mathcal H}$-module with $F$-basis $\{C_z'+(\hat M_xj_{\mathcal H})_F:\ z\sim_Rx\}$.
(Note that $(\hat M_xj_{\mathcal H})_F=\hat M_{F,x}j_{{\mathcal H}_F}$ since $j_{\mathcal H}$ can be extended uniquely to an automorphism $j_{{\mathcal H}_F}$ of $\mathcal H_F$.)
We aim to show that the Specht module $S_F^{\lambda'}$ is $\mathcal H_F$-isomorphic to $S_{F,w_{J(\lambda')}}^{\bullet}$.
For this, we define $w=w_0w_{J(\lambda)}w_{\lambda}$ as in~\cite[Lemma~3.3]{MPa05}
(by $w_0$ we denote the longest element of $W=S_m$ and by $w_\lambda$ the element $w_D$ where $D$ is the unique diagram in $\mathcal D^{(\lambda,\lambda')}$).
In view of Result~\ref{res:5c}, we get from~\cite[Lemma~3.3(v)]{MPa05} that $\shape w=\shape w_{J(\lambda')}$.
From~\cite[\S5]{KLu79} (see also~\cite[Corollary~5.8]{Gec05}) we get that $S_{F,w}^{\bullet}\cong S_{F,w_{J(\lambda')}}^{\bullet}$ as $\mathcal H_F$-modules.
From~\cite[Lemma~3.4]{MPa05} and its proof we also know that $S_{F,w}^{\bullet}\cong S_{F,w_{J(\lambda)}}$ and that $S_{F,w_{J(\lambda)}}\cong S_F^{\lambda'}$ as $\mathcal H_F$-modules.
We conclude that $S_{F,w_{J(\lambda')}}^{\bullet}\cong S_{F,w}^{\bullet}\cong S_{F,w_{J(\lambda)}}\cong S_F^{\lambda'}$ as $\mathcal H_F$-modules.

For the remainder of the proof we work under the specialization $q^{1/2}\mapsto1$ in order to apply Young's rule (see, for example,~\cite[page~51]{Jam78}).
Let $l$ be the number of $c$-semistandard $\lambda$-tableaux of type $\mu$.
Then clearly $l$ is the number of semistandard $\lambda'$-tableaux of type $\mu$.
From Young's rule we get that $l$ is the number of times $S_F^{\lambda'}\,(\cong S_{F,w_{J(\lambda')}}^{\bullet})$ occurs as a composition factor of $(M_{w_{J(\mu)}}j_{\mathcal H})_F$.

We conclude, invoking again the result in~\cite{KLu79} (or~\cite{Gec05}) used earlier in the proof, that $l$ is the number of right cells $\mathfrak C$ with $\shape\mathfrak C=\lambda$ occurring in $\{w\in W:\ w\LEQ_Rw_{J(\mu)}\}$ which is the number of left cells $\tilde{\mathfrak C}$ with $\shape\tilde{\mathfrak C}=\lambda$ occurring in $L(\mu)$.
\end{proof}

\begin{corollary}\label{cor:4.3}
Let $w\in S_m$.
Then $w\LEQ_L w_{J(\mu)}$ if, and only if, $P(w)=P_T$ for some $T\in\mathcal T_c(\mu)$.
\end{corollary}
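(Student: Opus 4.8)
The plan is to combine two ingredients that are already in place: the characterization of left cells by their common insertion tableau (Result~\ref{res:5c}), and the bijection $T\mapsto P_T$ between $\mathcal T_c(\mu)$ and the set of left cells contained in $L(\mu)$, which the discussion preceding the statement reduces to the counting in Proposition~\ref{prop:4.2}. The one preliminary observation I would record is that $L(\mu)=\{w\in S_m:\ w\LEQ_L w_{J(\mu)}\}$ is a union of left cells: if $w\in L(\mu)$ and $w'\sim_L w$, then $w'\LEQ_L w\LEQ_L w_{J(\mu)}$ by transitivity, so $w'\in L(\mu)$. Consequently, for any $w\in S_m$, the condition $w\LEQ_L w_{J(\mu)}$ is equivalent to the entire left cell containing $w$ being contained in $L(\mu)$.

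For the reverse implication, suppose $P(w)=P_T$ for some $T\in\mathcal T_c(\mu)$. By definition $P_T=P(w(\mathfrak t_T))$ with $w(\mathfrak t_T)\in L(\mu)$, so $P(w)=P(w(\mathfrak t_T))$, and Result~\ref{res:5c} shows that $w$ and $w(\mathfrak t_T)$ lie in the same left cell. Since $L(\mu)$ is a union of left cells and contains $w(\mathfrak t_T)$, it also contains $w$; that is, $w\LEQ_L w_{J(\mu)}$.

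For the forward implication, suppose $w\LEQ_L w_{J(\mu)}$ and set $\lambda=\shape w$. The left cell $\mathfrak C$ containing $w$ then lies in $L(\mu)$, and by Result~\ref{res:5c} the common insertion tableau of the elements of $\mathfrak C$ is $P(w)$. Here I would invoke that $T\mapsto P_T$ is not merely injective but bijective onto the left cells inside $L(\mu)$: restricted to shape $\lambda$ it is an injection from $\mathcal T_c(\lambda,\mu)$ into the left cells of shape $\lambda$ contained in $L(\mu)$ (note $\shape P_T=\shape T=\lambda$, since $P_T$ is obtained from $T$ by replacing its entries), and Proposition~\ref{prop:4.2} equates the cardinalities of these two finite sets, forcing surjectivity. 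Hence there is a $T\in\mathcal T_c(\lambda,\mu)$ whose associated left cell is precisely $\mathfrak C$, so that $P_T=P(w)$, as required.

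The only genuine content beyond bookkeeping is the upgrade from injection to bijection, i.e.\ the use of Proposition~\ref{prop:4.2}; everything else is the formal passage between statements about individual elements and statements about the cells they generate, which is handled once $L(\mu)$ is seen to be $\LEQ_L$-closed. I expect no further obstacle.
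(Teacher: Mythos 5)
Your proof is correct and follows exactly the route the paper intends: the corollary is stated there without proof precisely because it is the combination of the injectivity of $T\mapsto P_T$ established in the discussion before Proposition~\ref{prop:4.2}, the counting in Proposition~\ref{prop:4.2} that upgrades this to a bijection shape by shape, and the fact that $L(\mu)$ is a union of left cells characterized by their insertion tableaux via Result~\ref{res:5c}. Your write-up simply makes these implicit steps explicit, so there is nothing to add.
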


At this point we need to recall some definitions in~\cite{Jam78}.

\begin{definition}
(i) \cite[Definition 15.2]{Jam78}
Given $\mathfrak t\in\mathfrak R(\mu)$, the quality of each term (either good or bad) of $\mathfrak t$ is determined as follows.\\
(a) All the 1's are good.\\
(b) An $(i+1)$ is good if, and only if, the number of previous good $i$'s is strictly greater than the number of previous good $(i+1)$'s.

(ii) \cite[Definitions 15.5 and 15.6]{Jam78}
Let $\lambda=(\lambda_1,\lambda_2,\ldots)$ be a sequence of non-negative integers such that for all $i$, $\lambda_{i+1}\le \lambda_i\le \mu_i$.
Then $(\lambda,\mu)$ is called a \emph{pair of partitions} for $m$.
In such a case we denote by $\mathfrak R(\lambda,\mu)$ the subset of $\mathfrak R(\mu)$ consisting of those sequences of type $\mu$ in which for every $i$, the number of good $i$'s is at least $\lambda_i$.

(iii) We define the \emph{sharp partition}, $\mu^{\sharp}(\mathfrak t)$, of $\mathfrak t\in\mathfrak R(\mu)$ by $\mu^{\sharp}(\mathfrak t)=(\mu^{\sharp}_i)$, where, for $i=1,2,\ldots$, the part $\mu^{\sharp}_i$ equals the number of good $i$'s in $\mathfrak t$.
(Then $\mu^{\sharp}(\mathfrak t)$ is a partition of $k$ for some $k\le m$.
Clearly $\mu^{\sharp}(\mathfrak t)\preceq\mu$, alternatively we can say that $(\mu^{\sharp},\mu)$ is a pair of partitions for~$m$.)
\end{definition}

\begin{remark}\label{rem:4.5}
Suppose $(\lambda,\mu)$ is a pair of partitions for $m$ (recall that $\mu=(\mu_1,\ldots,\mu_r)\vDash m$), with $\lambda=(\lambda_i)$.
Suppose further that $\mathfrak t\in\mathfrak R(\mu)$.
Then $\mu^{\sharp}(\mathfrak t)=\lambda$ if, and only if, for each $i$ there exist precisely $\lambda_i$ members of decreasing sequence $d^\mu(i)$ in the $i$-th column of $P(w(\mathfrak t))$.
Suppose now that $\mu^{\sharp}(\mathfrak t)=\lambda$.
We can then observe that the $\lambda_i$ members of $d^\mu(i)$ which appear in the $i$-th column of $P(w(\mathfrak t))$ must be contained in the top $\lambda_i$ rows of $P(w(\mathfrak t))$ and are necessarily the smallest $\lambda_i$ members of $d^\mu(i)$.
(Recall that $d^\mu(i)$ has precisely $\mu_i$ members.)
\end{remark}

The remaining results of this section give a connection between the notion of pair of partitions and Kazhdan--Lusztig cells.

\begin{proposition}\label{prop:LmulambdaLlambdamu}
Suppose that $(\lambda,\mu)$ is a pair of partitions for $m$.
Define the subsets $L(\mu;\lambda)$ and $L(\lambda,\mu)$ of $L(\mu)$ by
$L(\mu;\lambda)=\{w\in W:$ $w=w(\mathfrak t)$ for some $\mathfrak t\in\mathfrak R(\mu)$ with $\mu^{\sharp}(\mathfrak t)=\lambda\}$ and
$L(\lambda,\mu)=\{w\in W:$ $w=w(\mathfrak t)$ for some $\mathfrak t\in\mathfrak R(\lambda,\mu)\}$.
Then each one of the sets $L(\mu;\lambda)$ and $L(\lambda,\mu)$ is a union of left cells of $W$.
\end{proposition}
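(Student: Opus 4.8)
The plan is to reduce both statements to a single observation: for an element $w\in L(\mu)$, the sharp partition attached to its preimage sequence depends only on the Robinson--Schensted insertion tableau $P(w)$. Once this is in place, both sets are seen to be specified by conditions on $P(w)$ alone, and the characterisation of left cells by the insertion tableau (Result~\ref{res:5c}) finishes the argument.

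First I would recall two facts already available. The map $\mathfrak t\mapsto w(\mathfrak t)$ is a bijection from $\mathfrak R(\mu)$ onto $L(\mu)$, so every $w\in L(\mu)$ has a unique preimage, which I denote $\mathfrak t_w\in\mathfrak R(\mu)$; set $\sigma(w)=\mu^{\sharp}(\mathfrak t_w)$. Also $L(\mu)$ is itself a union of left cells, being a down-set for $\LEQ_L$: if $w\in L(\mu)$ and $w'\sim_L w$ then $w'\LEQ_L w\LEQ_L w_{J(\mu)}$, whence $w'\in L(\mu)$.

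The key step is to show that $\sigma$ is constant on every left cell contained in $L(\mu)$. By Remark~\ref{rem:4.5}, for each $i$ the part $\sigma(w)_i=\mu^{\sharp}(\mathfrak t_w)_i$ equals the number of members of the decreasing sequence $d^\mu(i)$ occurring in the $i$-th column of $P(w(\mathfrak t_w))=P(w)$; thus $\sigma(w)$ is determined by $P(w)$ alone. Since two elements of the same left cell share the same insertion tableau by Result~\ref{res:5c}, and since any left cell meeting $L(\mu)$ is contained in $L(\mu)$, the function $\sigma$ takes a constant value on each such cell.

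With this in hand the conclusion is immediate. We have $L(\mu;\lambda)=\{w\in L(\mu):\sigma(w)=\lambda\}$ and, since $\mathfrak t\in\mathfrak R(\lambda,\mu)$ means exactly that the number of good $i$'s, namely $\mu^{\sharp}(\mathfrak t)_i$, is at least $\lambda_i$ for every $i$, also $L(\lambda,\mu)=\{w\in L(\mu):\lambda\preceq\sigma(w)\}$. Each of these is a preimage under $\sigma$ of a set of partitions, hence a union of left cells because $\sigma$ is cell-constant; equivalently, $L(\lambda,\mu)=\bigcup_{\kappa:\,\lambda\preceq\kappa}L(\mu;\kappa)$ exhibits it as a union of the sets treated in the first case (the terms with $\kappa$ not a sharp partition being empty). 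I expect no serious obstacle here: the only point requiring care is the faithful translation of the combinatorial statistic $\mu^{\sharp}(\mathfrak t)$ into the tableau quantity of Remark~\ref{rem:4.5}, which is precisely what makes $\sigma$ factor through $P$.
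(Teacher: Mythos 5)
Your proposal is correct and follows essentially the same route as the paper: both arguments rest on Remark~\ref{rem:4.5} to show that the sharp partition of the sequence attached to $w\in L(\mu)$ is determined by the insertion tableau $P(w)$, and then invoke Result~\ref{res:5c} together with the decomposition $L(\lambda,\mu)=\bigcup_{\lambda\preceq\nu\preceq\mu}L(\mu;\nu)$. The only cosmetic difference is that you package the key observation as a cell-constant function $\sigma$, whereas the paper phrases it via Corollary~\ref{cor:4.3} and the tableaux $P_T$.
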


\begin{proof}
Assume the hypothesis and let $\lambda=(\lambda_i)$.
It follows from Corollary~\ref{cor:4.3} and Remark~\ref{rem:4.5} that $L(\mu;\lambda)=\{w\in W:$ $P(w)=P_T$ for some $T\in\mathcal T_c(\mu)$ such that $\lambda_i$ is the number of times $i$ appears in the $i$-th column of $T\}=\{w\in W:$ $P(w)=P_T$ for some $T\in\mathcal T_c(\mu)$ such that $\lambda_i$ is the number of members of $d^\mu(i)$ in the $i$-th column of $P(w)\}$.

Moreover, $L(\lambda,\mu)=\{w\in W:$ $w=w(\mathfrak t)$ for some $\mathfrak t\in\mathfrak R(\mu)$ such that $\lambda\preceq\mu^{\sharp}(\mathfrak{t})\preceq\mu\}=\cup_{\lambda\preceq\nu\preceq\mu}L(\mu;\nu)$.

The required result follows easily from the above observations in view of Result~\ref{res:5c}.
\end{proof}

\begin{remark}\label{rem:4.7}
(i) The proof of Proposition~\ref{prop:LmulambdaLlambdamu}, compare also with Remark~\ref{rem:4.5}, gives some additional information to that actually given in the statement of this proposition, as it provides a precise description of all the left cells (via the Robinson-Schensted insertion tableau of their elements) occurring in $L(\mu;\lambda)$ and $L(\lambda,\mu)$.

(ii) Keeping the notation introduced in Proposition~\ref{prop:LmulambdaLlambdamu} we can also observe that
$L(\lambda,\mu)=\{w\in W:$ $P(w)=P_T$ for some $T\in\mathcal T_c(\mu)$ such that the number of $i$'s in the $i$-th column of $T$ is at least $\lambda_i\}=\{w\in W:$ $P(w)=P_T$ for some $T\in\mathcal T_c(\mu)$ such that the number of members of $d^\mu(i)$ in the $i$-th column of $P(w)$ is at least $\lambda_i\}$.
\end{remark}

\begin{theorem}\label{prop:(1)}
Let  $\mathfrak t\in\mathfrak R(\mu)$ and let $e\in\mathfrak X_{J(\mu)}$ be defined by $e^{-1}w_{J(\mu)}=w(\mathfrak t)\,(\in L(\mu))$.
Suppose further that $d$ is a prefix of $e$ and let  $\mathfrak t'\in\mathfrak R(\mu)$ be defined by $d^{-1}w_{J(\mu)}=w(\mathfrak t')$.
Then $\mu^{\sharp}(\mathfrak t)\preceq\mu^{\sharp}(\mathfrak t')\,(\preceq\mu)$.
\end{theorem}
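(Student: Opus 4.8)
The plan is to reduce to a single elementary step and then to analyse its effect on the good/bad labelling. First I would note that $\mathfrak X_{J(\mu)}$ is closed under taking prefixes: if $e=df$ with $l(e)=l(d)+l(f)$ then every left descent of $d$ is a left descent of $e$, and since $\mathfrak X_{J(\mu)}$ consists precisely of the elements having no left descent in $J(\mu)$, it follows that $d\in\mathfrak X_{J(\mu)}$. Fixing a reduced expression of $e$ that begins with a reduced expression of $d$, every intermediate prefix therefore lies in $\mathfrak X_{J(\mu)}$ and determines a sequence in $\mathfrak R(\mu)$; as $\preceq$ is transitive, it suffices to prove the statement when $e=ds_k$ with $l(e)=l(d)+1$.

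For such a step, a direct computation from $w(\mathfrak t)=e^{-1}w_{J(\mu)}$ and $w(\mathfrak t')=d^{-1}w_{J(\mu)}$ shows that the row-forms of $w(\mathfrak t)$ and $w(\mathfrak t')$ differ exactly by interchanging the entries in positions $k$ and $k+1$. Because $l(w(\mathfrak t))=l(w(\mathfrak t'))+1$, the pair $(k,k+1)$ is an ascent of $w(\mathfrak t')$ and a descent of $w(\mathfrak t)$; since $w(\mathfrak t')\in L(\mu)$ has each $d^\mu(i)$ decreasing, the two interchanged entries lie in different blocks, so $a:=\mathfrak t'_k<\mathfrak t'_{k+1}=:b$ (the symbol in position $p$ being the block of the row-form entry there). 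Thus $\mathfrak t$ arises from $\mathfrak t'$ by swapping the adjacent ascending pair $a<b$ into descending order $b,a$, and the theorem reduces to the claim that such a swap weakly decreases the number of good $i$'s, for every $i$.

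To prove this I would exploit that, read left to right, the good/bad rule depends only on the current vector of good-counts. The two runs agree up to position $k-1$, hence enter position $k$ with identical good-counts. A short local computation at positions $k,k+1$ then shows that the good-count of $a$, and that of every symbol other than $b$, is unaffected, while the good-count of $b$ in $\mathfrak t$ is at most that in $\mathfrak t'$---strictly smaller only in the borderline case $a=b-1$ with equal running counts of $b-1$ and $b$. Consequently, after position $k+1$ the good-count vector $\widetilde G$ of $\mathfrak t$ satisfies $\widetilde G_c\le G_c$ for all $c$, with equality except possibly at $b$.

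The hard part is to propagate this inequality through the common suffix (positions $\ge k+2$), since the labelling is a genuine cascade across symbols and componentwise monotonicity is not evident a priori. I would isolate it as the self-contained statement: processing one fixed suffix from two good-count vectors with $\widetilde G\le G$ componentwise preserves $\widetilde G\le G$ to the end. The key is the invariant $\Delta_c:=G_c-\widetilde G_c\ge 0$: when a symbol $c$ is processed only $\Delta_c$ changes, and it can drop only when $c$ is bad in the $G$-run but good in the $\widetilde G$-run; writing out the two defining inequalities and combining $G_{c-1}\le G_c$ with $\Delta_{c-1}\ge 0$ forces $\Delta_c\ge 1$ just beforehand, so $\Delta_c$ never turns negative. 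Applying this to the suffix yields $\mu^{\sharp}(\mathfrak t)_c\le\mu^{\sharp}(\mathfrak t')_c$ for all $c$, and chaining the elementary steps gives $\mu^{\sharp}(\mathfrak t)\preceq\mu^{\sharp}(\mathfrak t')$. One could instead route the same monotonicity through the column description of $\mu^{\sharp}$ in Remark~\ref{rem:4.5} via the insertion tableaux, but the good-count propagation appears the most self-contained.
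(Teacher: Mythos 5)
Your proof is correct and follows essentially the same route as the paper's: reduce to a single step $e=ds_k$ with $l(e)=l(d)+1$, observe that the swap can only turn the single symbol $b\,(=q=p+1)$ at position $k+1$ from good to bad (and only in the borderline case of coinciding good-counts), and then propagate the resulting componentwise inequality of good-count vectors through the remaining positions. Your explicit invariant $\Delta_c = G_c-\widetilde G_c\ge 0$ is a cleaner formalization of the cascading step that the paper carries out informally for $q, q+1, q+2,\ldots$, but the underlying argument is the same.
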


\begin{proof}
Assume the hypothesis.
It suffices to consider the case $e=ds$ with $l(e)=l(d)+1$ (where $s$ is a Coxeter generator, say $s=(k\ k+1)$).
It follows that $w(\mathfrak t)=e^{-1}w_{J(\mu)}=sd^{-1}w_{J(\mu)}=sw(\mathfrak t')$.
Hence the row-form of $w(\mathfrak t)$ is obtained from the row-form of $w(\mathfrak t')$ by interchanging the entries in the $k$-th and $(k+1)$-th positions of $w(\mathfrak t')$.
Suppose $i$ and $j$ are respectively the entries at the $k$-th and $(k+1)$-th positions of $w(\mathfrak t')$.
The fact that $e=ds$ with $l(e)=l(d)+1$ with $e,d\in\mathfrak X_{J(\mu)}$ ensures that $l(w(\mathfrak t))=l(w(\mathfrak t'))+1$ and hence that $i<j$.
Moreover, since $e,d\in\mathfrak X_{J(\mu)}$, there exist $p,q\in\{1,\ldots,r\}$ with $p<q$ such that $i$ occurs in sequence $d^\mu(p)$ and $j$ occurs in sequence $d^\mu(q)$.
Clearly $\mu^{\sharp}(\mathfrak t)=\mu^{\sharp}(\mathfrak t')$ if $q>p+1$.
So for the rest of the proof we assume that $q=p+1$.
We first consider the case where the $p$ in the $k$-th position of $\mathfrak t'$ is bad.
Interchanging the entries in the $k$-th and $(k+1)$-th positions of $\mathfrak t'$ then has no effect on the quality of these entries showing that, in this case, $\mu^{\sharp}(\mathfrak t)=\mu^{\sharp}(\mathfrak t')$.
Next we consider the case where the $p$ in the $k$-th position of $\mathfrak t'$ is good.
If the $q$ in the $(k+1)$-th position of $\mathfrak t'$ is bad it is again clear that $\mu^{\sharp}(\mathfrak t)=\mu^{\sharp}(\mathfrak t')$.
Thus, in order to complete the proof, it is enough to consider the case where the $p$ in the $k$-th position and the $q\,(=p+1)$ in the $(k+1)$-th position of $\mathfrak t'$ are both good.

Let $l_1$ and $l_2$ be respectively the number of good $p$'s and the number of good $q$'s occurring in the first $k+1$ entries of $\mathfrak t'$.
Then, for the case we are considering, we have $l_1\ge l_2$.
If $l_1>l_2$, it is clear that interchanging the entries in the $k$-th and $(k+1)$-th positions of $\mathfrak t'$ does not affect the quality of any of the terms of $\mathfrak t'$, so again we have $\mu^{\sharp}(\mathfrak t)=\mu^{\sharp}(\mathfrak t')$ .
We can thus assume that $l_1=l_2$.
It follows from this last assumption that by interchanging the entries in the $k$-th and $(k+1)$-th positions of $\mathfrak t'$, the `good' $q$ which was in the $(k+1)$-th position now becomes a `bad' $q$ in the $k$-th position
(the quality of the $p$ which moves from position $k$ to position k+1 is not affected).
Now let $k'\ge k+1$.
Then the above discussion ensures that the number of good $q$'s in the first $k'$ positions of $\mathfrak t$ is less than or equal to the number of good $q$'s in the first $k'$ positions of $\mathfrak t'$.
Similarly, if we consider successively the way in which the quality of the entries $q+1$, $q+2$, \ldots\ in $\mathfrak t'$ can be affected by interchanging the entries in the $k$-th and $(k+1)$-th positions of $\mathfrak t'$, we can see that this last statement is in fact true with $q_1$ in the place of $q$ for any $q_1\ge q$.
We conclude that $\mu^{\sharp}(\mathfrak t)\preceq\mu^{\sharp}(\mathfrak t')$ in this final case also.
\end{proof}

\begin{corollary}\label{cor:4.9}
Suppose $(\lambda,\mu)$ is a pair of partitions for $m$ and let  $e\in\mathfrak X_{J(\mu)}$.
If $e^{-1}w_{J(\mu)}\in L(\lambda,\mu)$, then $d^{-1}w_{J(\mu)}\in L(\lambda,\mu)$ for any prefix $d$ of $e$.
\end{corollary}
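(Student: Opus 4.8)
The plan is to reduce the statement to Theorem~\ref{prop:(1)} by translating the two set-membership conditions into domination conditions between sharp partitions. First I would invoke the bijection $\mathfrak R(\mu)\to L(\mu)$, $\mathfrak t\mapsto w(\mathfrak t)$, established earlier in the section: since $e\in\mathfrak X_{J(\mu)}$ we have $e^{-1}w_{J(\mu)}\in L(\mu)$, so there is a unique $\mathfrak t\in\mathfrak R(\mu)$ with $e^{-1}w_{J(\mu)}=w(\mathfrak t)$. Unwinding the definition of $L(\lambda,\mu)$ together with that of $\mathfrak R(\lambda,\mu)$ (the sequences of type $\mu$ in which the number of good $i$'s is at least $\lambda_i$ for every $i$), the hypothesis $e^{-1}w_{J(\mu)}\in L(\lambda,\mu)$ is precisely the assertion that $\mathfrak t\in\mathfrak R(\lambda,\mu)$, i.e.\ that $\lambda\preceq\mu^{\sharp}(\mathfrak t)$.

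Next, given a prefix $d$ of $e$, I would write $d^{-1}w_{J(\mu)}=w(\mathfrak t')$ for the unique $\mathfrak t'\in\mathfrak R(\mu)$. This is legitimate because a prefix of an element of $\mathfrak X_{J(\mu)}$ again lies in $\mathfrak X_{J(\mu)}$ (any left descent of $d$ belonging to $J(\mu)$ would also be a left descent of $e$, contradicting $e\in\mathfrak X_{J(\mu)}$), so $d^{-1}w_{J(\mu)}\in L(\mu)$ and $\mathfrak t'$ is well defined; this is exactly the setup already present in the statement of Theorem~\ref{prop:(1)}. Applying that theorem directly to $\mathfrak t$, $\mathfrak t'$, $e$ and $d$ yields $\mu^{\sharp}(\mathfrak t)\preceq\mu^{\sharp}(\mathfrak t')$.

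Finally I would combine the two inequalities. Since $\preceq$ is defined componentwise ($\lambda\preceq\nu$ iff $\lambda_i\le\nu_i$ for all $i$) it is transitive, so $\lambda\preceq\mu^{\sharp}(\mathfrak t)\preceq\mu^{\sharp}(\mathfrak t')$ gives $\lambda\preceq\mu^{\sharp}(\mathfrak t')$. Hence $\mathfrak t'\in\mathfrak R(\lambda,\mu)$, and therefore $d^{-1}w_{J(\mu)}=w(\mathfrak t')\in L(\lambda,\mu)$, as required.

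Since the substantive content is carried by Theorem~\ref{prop:(1)}, I do not expect any serious obstacle here: the corollary is in essence a bookkeeping exercise. The only point requiring a moment's care is the translation of ``$e^{-1}w_{J(\mu)}\in L(\lambda,\mu)$'' into the domination statement ``$\lambda\preceq\mu^{\sharp}(\mathfrak t)$'' through the definitions of $L(\lambda,\mu)$ and $\mathfrak R(\lambda,\mu)$; once this and the analogous description for $d$ are in place, transitivity of $\preceq$ closes the argument at once. One could alternatively route the proof through Proposition~\ref{prop:LmulambdaLlambdamu}, using $L(\lambda,\mu)=\cup_{\lambda\preceq\nu\preceq\mu}L(\mu;\nu)$, but deducing it from Theorem~\ref{prop:(1)} is the most direct path.
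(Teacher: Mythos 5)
Your argument is correct and is exactly the intended deduction: the paper states Corollary~\ref{cor:4.9} without a separate proof precisely because, as you observe, membership in $L(\lambda,\mu)$ unwinds to $\lambda\preceq\mu^{\sharp}(\mathfrak t)$ via the bijection $\mathfrak R(\mu)\to L(\mu)$, and Theorem~\ref{prop:(1)} plus transitivity of $\preceq$ finishes it. Your added checks (uniqueness of $\mathfrak t$, and that a prefix of an element of $\mathfrak X_{J(\mu)}$ again lies in $\mathfrak X_{J(\mu)}$) are sound and merely make explicit what the paper leaves implicit.
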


Next we list some consequences of Proposition~\ref{prop:LmulambdaLlambdamu}  for some special cases of~$\mu$.

\begin{proposition}\label{prop:(2)}
If $\mu$ is a partition of $m$, then $L(\mu,\mu)=\{w\in S_m:\ w\sim_Lw_{J(\mu)}\}$.
\end{proposition}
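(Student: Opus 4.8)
The plan is to reduce everything to the Robinson--Schensted insertion tableau and then invoke Result~\ref{res:5c}. First I would observe that $(\mu,\mu)$ is genuinely a pair of partitions: the required inequalities $\mu_{i+1}\le\mu_i\le\mu_i$ hold because $\mu$ is a partition. Since a sequence of type $\mu$ contains exactly $\mu_i$ copies of each symbol $i$, the condition defining $\mathfrak R(\mu,\mu)$---that the number of good $i$'s be at least $\mu_i$---is equivalent to demanding that \emph{every} occurrence of every symbol be good, i.e. $\mu^{\sharp}(\mathfrak t)=\mu$. Hence $L(\mu,\mu)=\{w(\mathfrak t):\ \mathfrak t\in\mathfrak R(\mu),\ \mu^{\sharp}(\mathfrak t)=\mu\}=L(\mu;\mu)$ in the notation of Proposition~\ref{prop:LmulambdaLlambdamu}.

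Next I would pin down the insertion tableaux of the elements of this set. By Remark~\ref{rem:4.5}, $\mu^{\sharp}(\mathfrak t)=\mu$ forces, for each $i$, all $\mu_i$ members of the decreasing sequence $d^\mu(i)$ to lie in the $i$-th column of $P(w(\mathfrak t))$. Since the sets of members of $d^\mu(1),\ldots,d^\mu(r)$ partition $\{1,\ldots,m\}$ and account for all $m$ entries, each column $i$ must consist \emph{exactly} of the members of $d^\mu(i)$ and of nothing else. As $P(w(\mathfrak t))$ is a standard Young tableau its columns increase downwards, so the tableau is completely determined: it is the fixed tableau $P_0$ whose $i$-th column lists the members of $d^\mu(i)$ in increasing order (its column lengths being $\mu_1\ge\mu_2\ge\cdots$, which form a valid shape precisely because $\mu$ is a partition). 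Thus every element of $L(\mu,\mu)$ has one and the same insertion tableau $P_0$.

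Finally I would identify $P_0$ with $P(w_{J(\mu)})$ and close the argument with Result~\ref{res:5c}. Taking $\mathfrak t_0=(1^{\mu_1}2^{\mu_2}\cdots r^{\mu_r})$, the recipe for $w(\mathfrak t_0)$ reproduces the row-form $d^\mu(1)d^\mu(2)\cdots d^\mu(r)$ of $w_{J(\mu)}$, so $w(\mathfrak t_0)=w_{J(\mu)}$; and a short check (using $\mu_{i+1}\le\mu_i$) shows every entry of $\mathfrak t_0$ is good, so $w_{J(\mu)}\in L(\mu,\mu)$ and $P(w_{J(\mu)})=P_0$. By Result~\ref{res:5c} the left cell of $w_{J(\mu)}$ is exactly $\{w:\ P(w)=P_0\}$. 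One inclusion, $L(\mu,\mu)\subseteq\{w:\ w\sim_L w_{J(\mu)}\}$, is now immediate from the previous paragraph. For the reverse inclusion, if $w\sim_L w_{J(\mu)}$ then $P(w)=P_0$ and, since $w\LEQ_L w_{J(\mu)}$ gives $w\in L(\mu)$, the column description of $P_0$ together with Remark~\ref{rem:4.5} yields $\mu^{\sharp}(\mathfrak t)=\mu$ for the unique $\mathfrak t$ with $w=w(\mathfrak t)$, whence $w\in L(\mu,\mu)$.

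I expect the main obstacle to be the middle step: justifying that the single piece of data ``$\mu_i$ members of $d^\mu(i)$ sit in column $i$'' rigidifies the whole tableau to a unique $P_0$. This rests on the fact that the members of the $d^\mu(i)$ partition $\{1,\ldots,m\}$, so that each column is forced to be an \emph{entire} block $d^\mu(i)$ rather than merely to contain $\mu_i$ of its members; the partition hypothesis on $\mu$ enters exactly here (and in the goodness check for $\mathfrak t_0$) to guarantee that the resulting column lengths decrease and describe a genuine Young diagram.
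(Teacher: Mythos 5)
Your proof is correct and follows essentially the same route as the paper, which simply cites Proposition~\ref{prop:LmulambdaLlambdamu} (with Remark~\ref{rem:4.7}) and Result~\ref{res:5c}; you have filled in the details that the paper leaves as ``immediate'', namely that $\mathfrak R(\mu,\mu)$ forces $\mu^{\sharp}(\mathfrak t)=\mu$, that this rigidifies the insertion tableau to the unique $P_0$ with column $i$ equal to $d^\mu(i)$, and that $P_0=P(w_{J(\mu)})$ via the sequence $\mathfrak t_0=(1^{\mu_1}\cdots r^{\mu_r})$. The verification that every term of $\mathfrak t_0$ is good (using $\mu_{i+1}\le\mu_i$) and the column-counting argument are both sound, so nothing further is needed.
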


\begin{proof}
Immediate from Proposition~\ref{prop:LmulambdaLlambdamu} (see also Remark~\ref{rem:4.7}) in view of Result~\ref{res:5c}.
\end{proof}

\begin{proposition}\label{prop:(3)}
Suppose $m>1$, $r>1$, $\mu_r=1$ and that $\mu\,(=(\mu_1,\ldots,\mu_{r-1},1))$ is a partition of $m$.
Let $\lambda$ be the partition $(\mu_1,\ldots,\mu_{r-1})$ of $m-1$, $\mathfrak C$ the left cell containing $w_{J(\lambda)}$ and $\mathfrak X^*$ the set of distinguished left coset representatives of $S_{m-1}$ in $S_m$.
Then $L(\lambda,\mu)=\mathfrak X^*\mathfrak C$.
\end{proposition}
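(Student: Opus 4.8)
The plan is to reduce the statement to Proposition~\ref{prop:(2)} (the case where the two partitions coincide) applied inside $S_{m-1}$, by stripping off the unique entry coming from the last part $\mu_r=1$.

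First I would unwind what $\mathfrak R(\lambda,\mu)$ is in this situation. Since $\lambda=(\mu_1,\dots,\mu_{r-1})$, we have $\lambda_i=\mu_i$ for $1\le i\le r-1$ and $\lambda_i=0$ for $i\ge r$; as the number of good $i$'s never exceeds the total number $\mu_i$ of $i$'s, the defining condition ``the number of good $i$'s is at least $\lambda_i$'' forces \emph{every} $i$ with $1\le i\le r-1$ to be good, while imposing no condition on the single symbol $r$. Thus $\mathfrak t\in\mathfrak R(\lambda,\mu)$ exactly when all of its entries $1,\dots,r-1$ are good.

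The key step is a decomposition of $w(\mathfrak t)$ matching the coset structure of $\mathfrak X^*$. Given $\mathfrak t\in\mathfrak R(\mu)$, let $k$ be the position of its unique $r$ and let $\mathfrak t_0\in\mathfrak R(\lambda)$ be obtained by deleting that $r$. Because the entries of $d^\mu(i)$ coincide with those of $d^\lambda(i)$ and lie in $\{1,\dots,m-1\}$ for $i\le r-1$, while $d^\mu(r)=(m)$, the row-form of $w(\mathfrak t)$ is precisely the row-form of $w(\mathfrak t_0)\in S_{m-1}$ with the value $m$ inserted in position $k$. A short computation in one-line notation shows that inserting $m$ in position $k$ is the same as left multiplication by $x_k^{-1}$; hence $w(\mathfrak t)=x_k^{-1}\,w(\mathfrak t_0)$ with $x_k^{-1}\in\mathfrak X^*$ and $w(\mathfrak t_0)\in L(\lambda)$. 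Since every element of $\mathfrak X^*$ arises as some $x_k^{-1}$ and the factorization of an element of $S_m$ as (element of $\mathfrak X^*$) times (element of $S_{m-1}$) is unique, this sets up a bijection $\mathfrak t\leftrightarrow(k,\mathfrak t_0)$ and yields $L(\mu)=\mathfrak X^*L(\lambda)$.

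It remains to track the goodness condition through this deletion. Here I would observe that the quality of each entry $\le r-1$ is determined solely by the counts of the symbols $1,\dots,r-1$ read so far, so deleting the single $r$ leaves all these qualities unchanged; consequently $\mathfrak t\in\mathfrak R(\lambda,\mu)$ if and only if every entry of $\mathfrak t_0$ is good, i.e. $\mathfrak t_0\in\mathfrak R(\lambda,\lambda)$. Combining this with the previous paragraph gives $L(\lambda,\mu)=\mathfrak X^*L(\lambda,\lambda)$. Finally, applying Proposition~\ref{prop:(2)} to the partition $\lambda$ of $m-1$ inside $S_{m-1}$ identifies $L(\lambda,\lambda)$ with $\{w\in S_{m-1}:w\sim_L w_{J(\lambda)}\}=\mathfrak C$, completing the proof. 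I expect the main obstacle to be the bookkeeping in the key step: verifying cleanly that deleting the unique $r$ turns $w(\mathfrak t)$ into $x_k^{-1}w(\mathfrak t_0)$ (that is, pinning down the one-line and coset description of $\mathfrak X^*$ in our conventions) and that this is compatible with the bijections $\mathfrak R(\mu)\leftrightarrow L(\mu)$ and $\mathfrak R(\lambda)\leftrightarrow L(\lambda)$. Everything else is a direct translation of the goodness condition together with an appeal to Proposition~\ref{prop:(2)}.
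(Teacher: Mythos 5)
Your proof is correct, but it takes a genuinely different route from the paper's. The paper deduces the proposition in one line by combining the description of the left cells occurring in $L(\lambda,\mu)$ from Proposition~\ref{prop:LmulambdaLlambdamu} and Remark~\ref{rem:4.7} (here the requirement that column $i$ of $T$ contain at least $\lambda_i=\mu_i$ copies of $i$ pins down the first $r-1$ columns of $T$ completely, leaving only the single symbol $r$ free to sit at an outer corner) with the left-handed version of the induction theorem, Result~\ref{prop:2.1c}, which says that $\mathfrak X^*\mathfrak C$ is precisely the union of the left cells obtained by adding the entry $m$ at an outer corner of the insertion tableau of $\mathfrak C$. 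You instead bypass Result~\ref{prop:2.1c} altogether and argue at the level of elements: the factorization $w(\mathfrak t)=x_k^{-1}w(\mathfrak t_0)$, with $k$ the position of the unique $r$ and $\mathfrak t_0$ the sequence with that $r$ deleted, is a correct computation in one-line notation under the paper's conventions (since $x_k^{-1}$ sends $k\mapsto m$ and shifts $k+1,\dots,m$ down by one, which is exactly insertion of $m$ at position $k$, and $d^\mu(i)=d^\lambda(i)$ for $i\le r-1$ while $d^\mu(r)=(m)$); the goodness of the symbols $1,\dots,r-1$ is indeed unaffected by deleting the $r$; and $\lambda_i=\mu_i$ does force every such symbol to be good. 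This yields $L(\lambda,\mu)=\mathfrak X^*L(\lambda,\lambda)$ as an identity of subsets of $S_m$, and Proposition~\ref{prop:(2)} applied inside $S_{m-1}$ finishes the job. What your argument buys is an explicit, self-contained bijection that avoids the Barbasch--Vogan/Geck induction machinery (though it still leans on the cell theory through Proposition~\ref{prop:(2)}); what the paper's argument buys is brevity and, as a by-product, the explicit list of the left cells making up $L(\lambda,\mu)$ via their insertion tableaux.
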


\begin{proof}
Immediate from Proposition~\ref{prop:LmulambdaLlambdamu} (see also Remark~\ref{rem:4.7}) in view of Result~\ref{prop:2.1c}.
\end{proof}

\begin{remark}
(i) The last two results give a connection between sequences and pairs of partitions and certain $C$-bases for the Specht module and the induced Specht module we have described earlier on (see, in particular, Result~\ref{Result:JPAATh3.5} and the proof of Theorem~\ref{thm:SpFilForS2H}).

(ii) It follows from Theorem~\ref{prop:(1)} that the set $T=\{e\in\mathfrak X_{J(\mu)}: e^{-1}w_{J(\mu)}\in L(\lambda,\mu)\}$ satisfies the Schreier property (that is, it contains all prefixes of all its elements).
In both of the special  cases considered in Propositions~\ref{prop:(2)} and~\ref{prop:(3)} we also have, in view of the results in Section~\ref{Section:SpechtFiltrations}, that the set $T_1=L(\mu)\setminus\{e^{-1}w_{J(\mu)}:\ e\in T\}$ has the property that $x\in T_1$ whenever $y\in T_1$ and $x\LEQ_Ly$.
It would be interesting to investigate whether this last property  still holds in the more general case of $\lambda$ and $\mu$ considered in Theorem~\ref{prop:(1)}.
\end{remark}

\subsection*{Acknowledgment.} I would like to thank Thomas McDonough for very useful discussions and comments.

\end{document}